\documentclass[10pt,a4paper]{article}

\usepackage[utf8]{inputenc} 
\usepackage[english]{babel} 

\usepackage{graphicx}
\usepackage{mathtools, cuted}
\usepackage{hyperref}

\usepackage{mathtools}
\usepackage{amsmath, amssymb}

\usepackage{setspace}

\usepackage{amssymb}

\usepackage[a4paper, total={6in, 10in}]{geometry}






\usepackage{amsthm}



\newtheorem{theorem}{Theorem}[section]

\newtheorem{corollary}[theorem]{Corollary}
\newtheorem{lemma}[theorem]{Lemma} 
\newtheorem{proposition}[theorem]{Proposition} 
\newtheorem{remark}[theorem]{Remark}








\setlength{\parindent}{1em}
\setlength{\parskip}{0.5em}




\date{}

\singlespacing

\begin{document}

\title{Singularity in CM Sequences}
\author{Reza Rezaie and X. Rong Li
\thanks{The authors are with the Department of Electrical Engineering, University of New Orleans, New Orleans, LA 70148. Email addresses are
 {\tt\small rrezaie@uno.edu} and {\tt\small xli@uno.edu}. Research was supported by NASA through grant NNX13AD29A.}}

\maketitle

\begin{abstract}
Most existing results about modeling and characterizing Gaussian Markov, reciprocal, and conditionally Markov (CM) processes assume nonsingularity of the processes. This assumption makes the analysis easier, but restricts application of these processes. This paper studies, models, and characterizes the general (singular/nonsingular) Gaussian CM (including reciprocal and Markov) sequence. For example, to our knowledge, there is no dynamic model for the general (singular/nonsingular) Gaussian reciprocal sequence in the literature. We obtain two such models from the CM viewpoint. As a result, the significance of studying reciprocal sequences from the CM viewpoint is demonstrated. The results of this paper unify singular and nonsingular Gaussian CM (including reciprocal and Markov) sequences and provide tools for their application.

\end{abstract}

\textbf{Keywords:} Conditionally Markov, reciprocal, Markov, Gaussian, singular, dynamic model, characterization.

\section{Introduction}\label{Intro}

Consider stochastic sequences defined over $[0,N]=( 0,1,\ldots,N)$. For convenience, let the index be time. A sequence is Markov if and only if (iff) conditioned on the state at any time $k$, the segment before $k$ is independent of the segment after $k$. A sequence is reciprocal iff conditioned on the states at any two times $j$ and $l$, the segment inside the interval $(j,l)$ is independent of the two segments outside $[j,l]$. In other words, inside and outside are independent given the boundaries. A sequence is $CM_F$ ($CM_L$) over $[k_1,k_2]$ iff conditioned on the state at time $k_1$ ($k_2$), the sequence is Markov over $[k_1+1,k_2]$ ($[k_1,k_2-1]$) \cite{CM_Part_I_Conf}. The set of CM sequences is very large and it includes many classes. Every Markov sequence is a reciprocal sequence and every reciprocal sequence is a CM sequence.

Markov processes have been widely studied and used for modeling random problems in a lot of applications. However, they are not general enough for some problems (e.g., \cite{Fanas1}--\cite{Picci}), and more general processes are needed. Reciprocal processes are one generalization of Markov processes. CM processes (including reciprocal processes as a special case) provide a systematic approach for generalization of Markov processes (based on conditioning) leading to various classes of processes.

Reciprocal processes have been used in many different areas. In a quantized state space, \cite{Fanas1}--\cite{White_Tracking1} used finite-state reciprocal sequences and their generalization for detection of anomalous trajectory pattern, intent inference, and tracking. \cite{White_Gaussian} studied the generalized reciprocal sequence of \cite{White_Tracking1} in the Gaussian case. The idea of \cite{Simon_1}--\cite{Simon2} for intent inference, e.g., in an intelligent interactive vehicle's display, can be interpreted in a reciprocal process setting. \cite{Picci} used reciprocal processes in image processing. \cite{Levy_2} studied reciprocal processes in the context of stochastic mechanics. Also, in \cite{DD_Conf}--\cite{DW_Conf}, we used some CM sequences for motion trajectory modeling.

An application of CM sequences is in trajectory modeling. Consider the trajectory of a target from an origin to a destination. To emphasize that the trajectory ends up at the destination, we call it a destination-directed trajectory \cite{DD_Conf}. Let it be modeled by a sequence $[x_k]_0^N=(x_0,x_1,\ldots, x_N)$. One can model the main elements of such a trajectory (i.e., an origin, a destination, and motion in between) as follows. The origin, the destination, and their relationship is modeled by a joint density of $x_0$ and $x_N$. Assuming the density of $x_N$ is known, the evolution law can be modeled as a conditional density given the state at destination $x_N$. Different conditional densities correspond to different evolution laws. The simplest conditional density is the one which is equal to the product of its marginals: $p([x_k]_0^{N-1}|x_N)=\prod _{k=0}^{N-1}p(x_k|x_N)$. But this is often inadequate. Then, the next is a Markov conditional density: $p([x_k]_0^{N-1}|x_N)=p(x_0|x_N)\prod _{k=1}^{N-1}p(x_k|x_{k-1},x_N)$. This evolution law corresponds to a $CM_L$ sequence\footnote{By a $CM_L$ sequence we mean a sequence which is $CM_L$ over $[0,N]$.}. The main components of a $CM_L$ sequence $[x_k]_0^N$ are: a joint density of $x_0$ and $x_N$---in other words, an initial density and a final density conditioned on the initial, or equivalently, the other way round---and an evolution law that is conditionally Markov (conditioned on $x_N$). The above argument naturally leads to $CM_L$ sequences for modeling destination-directed trajectories. Following the same argument, we can consider more general and complicated evolution laws, if necessary. So, by choosing conditional densities, different destination-directed trajectories can be modeled.

Gaussian CM processes were introduced in \cite{Mehr} based on mean and covariance functions, where conditioning on the state at the first index (time) of the CM interval was considered. Also, it was assumed that the processes are nonsingular on the interior of the time interval. In \cite{CM_Part_I_Conf}, definitions of different (Gaussian/non-Gaussian) CM processes were presented based on conditioning on the state at the first or the last time of the CM interval, (stationary/non-stationary) nonsingular Gaussian CM sequences were studied, and their dynamic models and characterizations were obtained. \cite{ABRAHAM} commented on the relationship between the Gaussian CM process and the Gaussian reciprocal process. \cite{CM_Part_II_A_Conf} presented the relationship between the CM process and the reciprocal process for the general (Gaussian/non-Gaussian) case. Inspired by \cite{ABRAHAM}, in \cite{CM_Part_II_B_Conf} we obtained a representation of some nonsingular Gaussian CM sequences as a sum of a nonsingular Gaussian Markov sequence and an uncorrelated nonsingular Gaussian vector.

Reciprocal processes were introduced in \cite{Bernstein}, and studied in \cite{Jamison_Reciprocal}--\cite{Moura2}, \cite{ABRAHAM}--\cite{CM_Part_II_B_Conf}, \cite{White}--\cite{White_3}, \cite{Krener}--\cite{Carm}, and others. In \cite{Levy_Dynamic}, which is a significant paper on Gaussian reciprocal sequences, a dynamic model and a characterization of the nonsingular Gaussian reciprocal sequence were presented. Due to the dynamic noise correlation and the nearest-neighbor structure of that model, it is not easy to apply. For example, recursive estimation of a reciprocal sequence based on the model of \cite{Levy_Dynamic} is not straightforward and \cite{Levy_Dynamic}--\cite{Moura2} tried to find such a recursive estimator. A characterization of the nonsingular Gaussian Markov sequence (which is a special reciprocal sequence) was presented in \cite{Ackner}. In \cite{CM_Part_II_A_Conf}--\cite{CM_Part_II_B_Conf} we studied reciprocal sequences from the CM viewpoint and obtained a dynamic model with white dynamic noise for the nonsingular Gaussian reciprocal sequence. Recursive estimation of nonsinguar Gaussian reciprocal sequences based on that model is straightforward.

From the viewpoint of singularity, one can consider two extreme cases for Gaussian sequences. One extreme is a sequence being almost surely constant throughout the time interval. The other extreme is a nonsingular sequence, i.e., a sequence with a nonsingular covariance matrix. A singular sequence is in general between these two extreme cases (including the first case). For example, a Gaussian sequence can be singular because it is almost surely constant at a time (i.e., the state at a time is almost surely constant), or because the states of the sequence at two times are almost surely linearly dependent. There are various such causes (corresponding to different times) leading to singular Gaussian sequences. As a result, we have various singularities. It is desired to model and characterize all singular and nonsingular Gaussian sequences in a unified way.

Characterizations of nonsingular Gaussian Markov, reciprocal, and CM sequences were presented in \cite{Ackner}, \cite{Levy_Dynamic}, \cite{CM_Part_I_Conf}. However, these characterizations, which are based on the inverse of the covariance matrix of the whole sequence, do not work for singular sequences. In \cite{Doob} a characterization was presented for the scalar-valued (singular/nonsingular) Gaussian Markov process in terms of the covariance function. However, that characterization does not work for the general vector-valued case. In \cite{Krener} a characterization was presented for a special kind of nonsingular Gaussian reciprocal processes (i.e., second-order nonsingular Gaussian processes, that is, Gaussian processes with covariance matrices corresponding to any two times of the process being nonsingular). \cite{Chen} presented a characterization of the Gaussian reciprocal process based on the Markov property. That characterization is actually a representation of the reciprocal process in terms of the Markov process and is specifically for continuous-time processes. \cite{Carm} presented a different characterization of the Gaussian reciprocal process based on the Markov property. Characterizations of \cite{Chen} and \cite{Carm} converted the question about a characterization of the Gaussian reciprocal process to the question about a characterization (in terms of the covariance function) of the Gaussian Markov process, which was left unanswered for the general vector-valued Gaussian process. Later studies on the covariance of Gaussian processes were mainly under some nonsingularity assumption \cite{G_1}--\cite{G_3}. Despite the above attempts, to our knowledge, there is no characterization in terms of the covariance function for the general (singular/nonsingular) Gaussian CM (including reciprocal and Markov) process in the literature.    

The well-posedness of the reciprocal dynamic model presented in \cite{Levy_Dynamic} (i.e., the uniqueness of the sequence obeying the model) is guaranteed by the nonsingularity assumption for the covariance of the whole sequence. Dynamic models of different classes of nonsingular Gaussian CM sequences were presented in \cite{CM_Part_I_Conf}. It can be seen that unlike the model of \cite{Levy_Dynamic}, the nonsingularity assumption is not critical for the well-posedness of the CM models of \cite{CM_Part_I_Conf}. Following \cite{CM_Part_I_Conf}, \cite{CM_Part_II_A_Conf} obtained a dynamic model for the nonsingular Gaussian reciprocal sequence form the CM viewpoint. However, that model does not work for singular sequences, although the nonsingularity assumption is not critical for its well-posedness. To our knowledge, there is no dynamic model for the Gaussian reciprocal sequence\footnote{In the rest of the paper, by the ``Gaussian sequence" we mean the general singular/nonsingular Gaussian sequence. Otherwise, we make it explicit if we only mean the nonsingular Gaussian sequence (i.e., covariance of the whole sequence being nonsingular).} in the literature. For example, it is not clear how the model of \cite{Levy_Dynamic} can be extended to the Gaussian reciprocal sequence. More generally, there is no dynamic model for Gaussian CM sequences in the literature.      

Although they make the analysis and modeling easy, nonsingularity assumptions restrict application of Gaussian CM (including reciprocal and Markov) sequences. Without such assumptions, we have a larger and more powerful set of sequences for modeling problems. Some problems can be modeled by a singular sequence better than a nonsingular one. For example, \cite{DD_Conf} used a nonsingular Gaussian $CM_L$ sequence for trajectory modeling between an origin and a destination. Now assume that the origin/destination is known, i.e., some components of the state of the sequence at the origin/destination are almost surely constant. Then, a singular $CM_L$ sequence is a better model for such trajectories. 

The main goal of this paper is threefold: 1) to obtain dynamic models and characterizations of the general Gaussian CM (including reciprocal and Markov) sequence to unify singular and nonsingular Gaussian CM sequences theoretically, 2) to provide tools for application of (singular/nonsingular) Gaussian CM sequences, e.g., in trajectory modeling with destination information, 3) to emphasize the significance of studying reciprocal sequences from the CM viewpoint (which is different from the viewpoint of the literature), e.g., by obtaining dynamic models for the general Gaussian reciprocal sequence from the CM viewpoint. 

The main contributions of this paper are as follows. Dynamic models and characterizations of (singular/nonsingular) Gaussian CM, reciprocal, and Markov sequences are obtained. Two types of characterizations are presented for Gaussian CM and reciprocal sequences. The first type is in terms of the covariance function of the sequence. The second type, which is similar in spirit to (but different from) those of \cite{Chen} and \cite{Carm}, is based on the state concept in system theory (i.e., the Markov property). By deriving a characterization for the general vector-valued Gaussian Markov sequence in terms of the covariance function, we can check the Markov property. Then, the second type of characterization of Gaussian CM and reciprocal sequences (which is based on the Markov property) becomes complete and makes better sense. It is shown that dynamic models of Gaussian CM sequences have a structure similar to those of nonsingular Gaussian CM sequences \cite{CM_Part_I_Conf}, and they differ in the values of their parameters. Therefore, the presented models unify singular and nonsingular Gaussian CM sequences. No dynamic model for general singular/nonsingular Gaussian reciprocal sequences is available. For example, it is not clear how the reciprocal model of \cite{Levy_Dynamic} (which is only for nonsingular Gaussian reciprocal sequences) can be extended to the general singular/nonsingular case even after so many years. We obtain two dynamic models for the Gaussian reciprocal sequence from the CM viewpoint (which is different from the viewpoint of the literature on reciprocal sequences). As a result, the significance and the fruitfulness of studying reciprocal sequences from the CM viewpoint is demonstrated. A full spectrum of models (characterizations) ranging from a $CM_L$ model (characterization) to a reciprocal $CM_L$ model\footnote{Every reciprocal sequence is a $CM_L$ sequence. A $CM_L$ model describing a reciprocal sequence is called a reciprocal $CM_L$ model.} (characterization) is presented for the Gaussian case. The obtained models and characterizations unify singular and nonsingular Gaussian CM sequences. The Markov-based representations of nonsingular Gaussian CM sequences presented in \cite{CM_Part_II_B_Conf} are extended to the general singular/nonsingular case. 
  
In \cite{CM_Part_I_Conf} and \cite{CM_Part_II_A_Conf}--\cite{CM_Part_II_B_Conf} dynamic models and characterizations of nonsingular Gaussian CM (including reciprocal) sequences were presented. They were applied to trajectory modeling in \cite{DD_Conf}--\cite{DW_Conf}. The nonsingularity assumption is required for models and characterizations obtained in \cite{CM_Part_I_Conf}, \cite{CM_Part_II_A_Conf}--\cite{CM_Part_II_B_Conf}. For example, the characterizations are based on the inverse of the covariance matrix of the whole sequence. But the covariance of a singular sequence is not invertible. Also, proofs of models presented in \cite{CM_Part_I_Conf} and \cite{CM_Part_II_A_Conf}--\cite{CM_Part_II_B_Conf} are based on the nonsingularity of sequences and do not work for the singular case. In this paper, we use different ideas and approaches to obtain dynamic models and characterizations for the general singular/nonsingular case. 

The paper is organized as follows. Section \ref{Section_Definition} reviews some definitions and preliminaries required in the next sections. Dynamic models and characterizations of Gaussian $CM_L$ ($CM_F$) and Gaussian reciprocal sequences are obtained in Section \ref{Section_CM} and Section \ref{Section_Reciprocal}, respectively. Models and characterizations of some other CM classes are presented in Section \ref{Section_Other_CM}. Section \ref{Section_Summary} contains a summary and conclusions.

\section{Definitions and Preliminaries}\label{Section_Definition}

We consider sequences defined over $[0,N]$, which is a general discrete index interval, but for simplicity it is called time. The following conventions are used for sequences and time intervals:
\begin{align*}
[i,j]& \triangleq ( i,i+1,\ldots ,j-1,j ), \quad i<j, \quad i,j \in [0,N] \\
[x_k]_{i}^{j} & \triangleq ( x_i, x_{i+1}, \ldots, x_j ), \quad [x_k] \triangleq [x_k]_{0}^{N}
\end{align*}
where $k$ in $[x_k]_i^j$ is a dummy variable. We consider $k_1,k_2, l_1, l_2 \in [0,N]$. The symbols `` $ ' $ " and ``$\setminus $" are used for transposition and set subtraction, respectively. In addition, $0$ may denote a zero scalar, vector, or matrix, as is clear from the context. For a matrix $P$, $P_{i,j}$ denotes the (block) entry at (block) row $i+1$ and (block) column $j+1$ of $P$. Also, $P_i \triangleq P_{i,i}$. For example, $C$ is the covariance matrix of the whole sequence $[x_k]$, $C_{i,j}$ is the covariance function\footnote{$i,j \in [0,N]$, but matrix $C$ has (block) rows (columns) $1$ to $N+1$.}, and $C_i \triangleq C_{i,i}$. $F(\cdot | \cdot)$ denotes the conditional cumulative distribution function (CDF). $E[\cdot]$ denotes the expectation operator. The abbreviation ZMG is used for ``zero-mean Gaussian". Some equations and statements hold almost surely (and not strictly), as is clear from the context. For clarity, in some cases we mention it explicitly. The abbreviation ``a.s." stands for ``almost surely".

Formal measure-theoretical definitions of CM and reciprocal processes can be found in \cite{CM_Part_I_Conf}, \cite{Jamison_Reciprocal}. Here, we present definitions in a simple language.

A sequence $[x_k]$ is $[k_1,k_2]$-$CM_c, c \in \lbrace k_1,k_2 \rbrace$, (i.e., CM over $[k_1,k_2]$) iff conditioned on the state at time $c=k_1$ ($c=k_2$), the sequence is Markov over $[k_1+1,k_2]$ ($[k_1,k_2-1]$). The above definition is equivalent to the following lemma \cite{CM_Part_I_Conf}.

\begin{lemma}\label{CMc_CDF}
$[x_k]$ is $[k_1,k_2]$-$CM_c, c \in \lbrace k_1,k_2 \rbrace$, iff $ F(\xi _k|[x_{i}]_{k_1}^{j},$ $x_{c}) =F(\xi _k|x_j,x_c)$ \footnote{$F(\xi_k|x_j)=P\lbrace x^1_k\leq \xi^1_k, x^2_k\leq \xi^2_k, \ldots , x^d_k\leq \xi^d_k|x_j \rbrace$, where for example $x^1_k$ and $\xi^1_k$ are the first entries of the vectors $x_k$ and $\xi_k$, respectively. Similarly for other CDFs.}, $\forall j,k \in [k_1,k_2], j<k$, $\forall \xi _k \in \mathbb{R}^d$, where $d$ is the dimension of $x_k$.  

\end{lemma}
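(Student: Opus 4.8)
The plan is to unfold the definition of the $[k_1,k_2]$-$CM_c$ property and translate its ``conditionally Markov'' requirement into the stated conditional-CDF identity, handling the two cases $c=k_1$ and $c=k_2$ in parallel. By definition, $[x_k]$ is $[k_1,k_2]$-$CM_c$ iff, under the measure obtained by conditioning on $x_c$, the sequence is Markov over $[k_1+1,k_2]$ (when $c=k_1$) or over $[k_1,k_2-1]$ (when $c=k_2$). Thus the whole argument reduces to the equivalence, for a sequence on an index interval, between the segment-independence form of the Markov property used in the definition and the forward conditional-CDF form appearing in the lemma, with $x_c$ carried along in every conditioning event.

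First I would record the CDF characterization of the Markov property in isolation: a sequence $[y_k]$ on $[a,b]$ is Markov (for every $m$, the segment before $m$ is independent of the segment after $m$ given $y_m$) iff $F(\eta_k\,|\,[y_i]_a^j)=F(\eta_k\,|\,y_j)$ for all $a\le j<k\le b$ and all $\eta_k$. The direction from segment independence to the CDF identity is immediate: for $k>j$ the state $x_k$ lies in the future of the split at $j$ while $[y_i]_a^{j-1}$ lies in the past, so conditioning on the full history $[y_i]_a^j$ contributes nothing beyond $y_j$. The converse is where the work lies: one expresses the conditional law of the future block $[y_i]_{j+1}^b$ given the whole past $[y_i]_a^j$ through the chain rule for regular conditional distributions (heuristically $\prod_{m=j+1}^{b} p(y_m\,|\,[y_i]_a^{m-1})$), uses the forward CDF identity to collapse each one-step conditional so that it depends on $y_{m-1}$ only, and recognizes the result as the conditional law of $[y_i]_{j+1}^b$ given $y_j$; an induction on the block length then yields full segment independence at the split $j$, and letting $j$ range over $[a,b]$ gives the Markov property.

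Next I would apply this equivalence inside the conditional measure given $x_c$. For $c=k_1$ the Markov interval is $[k_1+1,k_2]$, so the identity reads $F(\xi_k\,|\,[x_i]_{k_1+1}^j,x_{k_1})=F(\xi_k\,|\,x_j,x_{k_1})$ for $k_1+1\le j<k\le k_2$; since $x_c=x_{k_1}$ is already in the conditioning, prepending the index $k_1$ to the history changes nothing, so this is exactly $F(\xi_k\,|\,[x_i]_{k_1}^j,x_c)=F(\xi_k\,|\,x_j,x_c)$, while the case $j=k_1$ not covered by the Markov interval is vacuous because both sides then equal $F(\xi_k\,|\,x_{k_1})$. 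For $c=k_2$ the Markov interval is $[k_1,k_2-1]$, giving the identity for $k_1\le j<k\le k_2-1$; the extra case $k=k_2$ allowed by the lemma's range is trivial, since conditioning on $x_c=x_{k_2}$ already determines the event $\{x_{k_2}\le\xi_{k_2}\}$ and both sides reduce to the same indicator. Hence in both cases the lemma's uniform range $j,k\in[k_1,k_2]$, $j<k$, captures exactly the Markov content together with vacuous boundary equalities.

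I expect the inductive converse of the second paragraph to be the main obstacle: showing that the forward CDF identities, taken over all splits and all future times, force the full conditional independence of the past and future segments requires a careful factorization inside the conditional measure given $x_c$. A secondary but genuine subtlety, important for this paper's scope, is that the sequence may be singular and admit no joint density, so the factorization must be carried out at the level of conditional CDFs / regular conditional distributions rather than densities. Once this equivalence is secured, the remaining bookkeeping---matching the conditioning set $[x_i]_{k_1}^j,x_c$ to the correct Markov interval and dispatching the vacuous boundary indices---is routine.
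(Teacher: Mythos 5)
The paper gives no proof of this lemma---it is quoted from \cite{CM_Part_I_Conf} as a known equivalence---so there is nothing to compare against line by line; judged on its own, your argument is correct and is the natural one: unfold the definition, invoke the standard equivalence between the segment-independence and forward-conditional-CDF forms of the Markov property inside the conditional measure given $x_c$, and dispatch the boundary indices ($j=k_1$ when $c=k_1$, $k=k_2$ when $c=k_2$) as trivial identities. The only step worth spelling out in a full write-up is the monotone-class (Dynkin) argument needed to pass from the orthant-indicator CDF identities to general bounded measurable functions of $x_k$ before running the chain-rule induction on the future block, and you correctly flag the other genuine subtlety, namely that the whole factorization must be carried out with regular conditional distributions rather than densities because the sequence may be singular.
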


The interval $[k_1,k_2]$ of a $[k_1,k_2]$-$CM_c$ sequence is called the \textit{CM interval} of the sequence.

\begin{remark}\label{R_CMN}
We use the following notation ($ k_1<k_2$)
\begin{align*}
[k_1,k_2]\text{-}CM_c=\left\{ \begin{array}{cc} 
[k_1,k_2]\text{-}CM_F & \text{if  } c=k_1\\ \relax
[k_1,k_2]\text{-}CM_L & \text{if  } c=k_2
\end{array} \right.
\end{align*}
where the subscript ``$F$" or ``$L$" is used because the conditioning is at the \textit{first} or the \textit{last} time of the CM interval. 

\end{remark}

\begin{remark}\label{R_CMN_2}
The $[0,N]$-$CM_c$ sequence is called $CM_c$, $c \in \lbrace 0,N \rbrace$; i.e., the CM interval is dropped if it is the whole time interval.

\end{remark}

A $CM_0$ ($CM_N$) sequence is called $CM_F$ ($CM_L$). Let $[x_k]$ be a $[k_1,k_2]$-$CM_c$ sequence. By Remark \ref{R_CMN_2}, $[x_k]_{k_1}^{k_2}$ is a $CM_c$ sequence. By a $CM_L \cap [k_1,N]$-$CM_F$ sequence we mean a sequence being both $CM_L$ and $[k_1,N]$-$CM_F$. We define that every sequence with a length smaller than 3 (i.e., $\lbrace x_0,x_1 \rbrace$, $\lbrace x_0 \rbrace$, and $\lbrace  \rbrace$) is Markov. Similarly, every sequence is $[k_1,k_2]$-$CM_c$, $|k_2 - k_1| <3$. So, $CM_L$ and $CM_L \cap [k_1,N]$-$CM_F$, $k_1 \in [N-2,N]$ are equivalent.

We have the following lemma \cite{CM_Part_II_A_Conf}, \cite{CM_Part_I_Conf}, \cite{Jamison_Reciprocal}.
\begin{lemma}\label{CDF}
$[x_k]$ is reciprocal iff $F(\xi _k|[x_{i}]_{0}^{j},[x_i]_l^N)=F(\xi _k|$ $x_j,x_l), \forall j,k,l \in [0,N], j < k < l$, $\forall \xi _k \in \mathbb{R}^d$, where $d$ is the dimension of $x_k$. 

\end{lemma}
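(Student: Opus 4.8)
The plan is to read both the reciprocal property and the stated identity as conditional-independence statements and then to bridge the gap between them: the former concerns the whole inside segment $[x_i]_{j+1}^{l-1}$, while the latter concerns a single inside index $x_k$. I would work throughout with regular conditional distributions (equivalently, conditional densities with respect to a common Gaussian reference measure), which exist in the setting at hand even when the sequence is singular, so that the chain rule for conditional laws is available.

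For the direction ``reciprocal $\Rightarrow$ identity'', fix $j<k<l$. By the definition of reciprocal, conditioned on $(x_j,x_l)$ the inside segment $[x_i]_{j+1}^{l-1}$ is independent of the outside $([x_i]_0^{j-1},[x_i]_{l+1}^N)$; since $x_k$ is a sub-block of that segment, $x_k$ is likewise conditionally independent of the outside given $(x_j,x_l)$. As conditioning on $([x_i]_0^{j},[x_i]_l^N)$ amounts to conditioning on $(x_j,x_l)$ together with the outside, this yields $F(\xi_k|[x_i]_0^{j},[x_i]_l^N)=F(\xi_k|x_j,x_l)$. This direction is immediate and is not the crux.

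For the converse, fix $j<l$ and decompose the joint conditional law of the inside segment by the chain rule,
\begin{equation*}
F([\xi_i]_{j+1}^{l-1}|[x_i]_0^{j},[x_i]_l^N)=\prod_{k=j+1}^{l-1}F(\xi_k|[x_i]_0^{k-1},[x_i]_l^N),
\end{equation*}
where I have used $\{x_0,\ldots,x_j\}\cup\{x_{j+1},\ldots,x_{k-1}\}=[x_i]_0^{k-1}$. Applying the hypothesis to each factor with the triple $(k-1,k,l)$ in place of $(j,k,l)$---legitimate since $k-1<k<l$---turns the $k$-th factor into $F(\xi_k|x_{k-1},x_l)$, so that
\begin{equation*}
F([\xi_i]_{j+1}^{l-1}|[x_i]_0^{j},[x_i]_l^N)=\prod_{k=j+1}^{l-1}F(\xi_k|x_{k-1},x_l).
\end{equation*}
The right-hand side depends on the conditioning data $([x_i]_0^{j},[x_i]_l^N)$ only through the two boundary states $x_j$ and $x_l$ (all other states entering it, namely $x_{j+1},\ldots,x_{l-2}$, are arguments of the inside law, not conditioning data). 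In particular it is free of the outside.

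It then remains to turn ``no dependence on the outside'' into the reciprocal property. Since $F([\xi_i]_{j+1}^{l-1}|[x_i]_0^{j},[x_i]_l^N)$ is a.s. a function of $(x_j,x_l)$ alone, it must coincide a.s. with $F([\xi_i]_{j+1}^{l-1}|x_j,x_l)$, because the latter is the conditional expectation of the former given $(x_j,x_l)$ and a quantity constant in the outside equals its own average. This is exactly the independence of the inside segment from the outside given $(x_j,x_l)$, i.e. reciprocity. I expect the main obstacle to lie not in the combinatorics above but in making the last two steps rigorous for singular sequences: the chain-rule factorization and the passage from an outside-free conditional law back to $F(\cdot|x_j,x_l)$ must be phrased in terms of regular conditional distributions (or conditional characteristic functions) rather than Lebesgue densities, since a singular Gaussian sequence need not admit a joint density.
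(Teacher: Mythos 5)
The paper does not actually prove Lemma~\ref{CDF}: it is quoted from \cite{CM_Part_II_A_Conf}, \cite{CM_Part_I_Conf}, \cite{Jamison_Reciprocal} without argument, so there is no in-paper proof to match yours against. On its own merits, your strategy is the classical one (essentially Jamison's): the forward direction is immediate from the definition of reciprocity, and the converse iterates the one-point identity over the inside indices and then uses the tower property to pass from ``the conditional law of the inside block is $\sigma(x_j,x_l)$-measurable'' to ``it equals the conditional law given $(x_j,x_l)$.'' Both the overall plan and the final tower-property step are correct, and your remark that everything must be phrased via regular conditional distributions rather than densities is exactly the right caution for the singular Gaussian case.

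The one genuine defect is the displayed ``chain rule.'' As written, $F([\xi_i]_{j+1}^{l-1}\,|\,[x_i]_0^{j},[x_i]_l^N)=\prod_{k=j+1}^{l-1}F(\xi_k\,|\,[x_i]_0^{k-1},[x_i]_l^N)$ equates a $\sigma([x_i]_0^{j},[x_i]_l^N)$-measurable random variable with a product whose $k$-th factor is conditioned on the random variables $x_{j+1},\ldots,x_{k-1}$; already for two inside points the left side is $E\bigl[\mathbf{1}\{x_{j+1}\le\xi_{j+1}\}\,P(x_{j+2}\le\xi_{j+2}\,|\,[x_i]_0^{j+1},[x_i]_l^N)\;\big|\;[x_i]_0^{j},[x_i]_l^N\bigr]$, not a pointwise product of two conditional CDFs. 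The correct disintegration is a nested integral of conditional kernels $\nu_k(\,\cdot\,;\xi_{j+1},\ldots,\xi_{k-1})$ against one another, with the inside states appearing as integration variables. The argument is repairable: substituting the hypothesis turns each kernel into $F(d\xi_k\,|\,\xi_{k-1},x_l)$, the nested integral then depends on the conditioning data only through $(x_j,x_l)$ (the innermost integration is against $F(d\xi_{j+1}\,|\,x_j,x_l)$), and your tower-property step finishes the proof. So the idea survives, but the factorization step needs to be rewritten as a disintegration rather than a product of conditional CDFs before the proof is rigorous.
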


For the Gaussian case, we have the following lemmas for CM and Markov sequences \cite{CM_Part_I_Conf}. 

\begin{lemma}\label{GaussianCMc_Definition_Expectation}
A Gaussian $[x_k]$ is $[k_1,k_2]$-$CM_c, c \in \lbrace k_1,k_2 \rbrace$, iff  $E[x_k|[x_{i}]_{k_1}^{j},x_{c}]=E[x_k|x_j,x_{c}]$, $\forall j,k \in [k_1,k_2], j<k$. 

\end{lemma}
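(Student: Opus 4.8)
The plan is to reduce the statement to the CDF characterization of Lemma \ref{CMc_CDF} and then exploit the special structure of Gaussian conditional laws, namely that a Gaussian conditional distribution is pinned down entirely by its conditional mean and conditional covariance, both of which arise from linear estimation (projection). Throughout, write $B \triangleq (x_j,x_c)$ and let $A \triangleq [x_i]_{k_1}^{j-1}$ collect the remaining conditioning variables, so that the left-hand conditioning set $([x_i]_{k_1}^{j},x_c)=([x_i]_{k_1}^{j-1},x_j,x_c)$ equals $(A,B)$ while the right-hand set is $B$. By Lemma \ref{CMc_CDF}, the Gaussian sequence is $[k_1,k_2]$-$CM_c$ iff $F(\xi_k|A,B)=F(\xi_k|B)$ for all admissible $j<k$ in $[k_1,k_2]$. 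Hence it suffices to prove that, for jointly Gaussian variables, $E[x_k|A,B]=E[x_k|B]$ is equivalent to $F(\xi_k|A,B)=F(\xi_k|B)$.

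The forward implication is immediate: if the two conditional CDFs coincide, then so do their first moments, giving $E[x_k|A,B]=E[x_k|B]$.

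For the converse, I would argue through the estimation error. Since $(x_k,A,B)$ is jointly Gaussian, the conditional mean $\hat{x}\triangleq E[x_k|B]$ is an affine function of $B$, and the residual $e\triangleq x_k-\hat{x}$ is uncorrelated with $B$, hence (being Gaussian) independent of $B$. The hypothesis $E[x_k|A,B]=E[x_k|B]$ says that the linear least-squares estimate of $x_k$ based on $(A,B)$ coincides with the one based on $B$ alone; by orthogonality of the projection error, the residual $e=x_k-\hat{x}=x_k-E[x_k|A,B]$ is then also uncorrelated with $A$, so $e$ is independent of the whole pair $(A,B)$. Two consequences follow. First, the conditional covariance is unchanged: $\mathrm{Cov}(x_k|A,B)=\mathrm{Cov}(e)=\mathrm{Cov}(x_k|B)$. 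Second, the conditional mean $E[x_k|A,B]=E[x_k|B]$ depends on $B$ only. Because a Gaussian conditional law is determined by its mean and covariance, $F(\xi_k|A,B)$ and $F(\xi_k|B)$ are identical Gaussian CDFs, which is exactly the CM property via Lemma \ref{CMc_CDF}.

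The main obstacle is this converse step of promoting equality of conditional means to equality of full conditional distributions, a promotion that fails for general non-Gaussian sequences. It works here only because (i) a Gaussian conditional distribution is specified by two moments, and (ii) the conditional covariance of a Gaussian vector equals the covariance of its linear estimation error, which the mean-equality hypothesis leaves invariant. I would therefore make the orthogonality argument explicit, verifying that the coincidence of the projections of $x_k$ onto $\mathrm{span}(B)$ and $\mathrm{span}(A,B)$ renders the residual orthogonal to $A$; that uncorrelatedness, together with the Gaussian assumption, is precisely what delivers independence and hence the equality of conditional covariances needed to conclude.
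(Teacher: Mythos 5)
Your argument is correct. Note, however, that the paper does not actually prove this lemma: it is quoted from the reference on nonsingular Gaussian CM sequences, so there is no in-paper proof to compare against. What you propose is exactly the standard route and, more to the point, it is the same machinery this paper uses repeatedly in the proof of Theorem \ref{CMc_Dynamic_Forward_Theorem}: write the conditional mean as an affine (MP-inverse) projection, observe that the residual is uncorrelated with the conditioning variables, and upgrade uncorrelatedness to independence via joint Gaussianity so that equality of conditional means forces equality of the full conditional laws. The one point worth making explicit, since the whole paper is about singular sequences, is that the identification of $E[x_k|A,B]$ with the linear least-squares projection (and hence the orthogonality of the residual to $A$) must be justified without densities; this is precisely what the paper does via the MP-inverse and the a.s.\ uniqueness of the conditional expectation in its equations $\eqref{Conditional_Expectation_BC1}$--$\eqref{Gaussian_Linear}$, and your argument goes through verbatim once that is in place, because a possibly degenerate Gaussian conditional law is still determined by its conditional mean and the covariance of the residual.
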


\begin{lemma}\label{GaussianMarkov_Definition_E}
A Gaussian $[x_k]$ is Markov iff  $E[x_k|[x_i]_0^j]=E[x_k|x_j]$, $\forall j, k \in [0,N], j < k$.  

\end{lemma}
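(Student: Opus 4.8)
The plan is to reduce the Markov property to its distributional (CDF) form and then use that a jointly Gaussian conditional law is fixed once its mean and covariance are known. First I would record the CDF characterization of the Markov property, the analogue of Lemma~\ref{CDF}: $[x_k]$ is Markov iff $F(\xi_k|[x_i]_0^j)=F(\xi_k|x_j)$ for all $j,k\in[0,N]$ with $j<k$ and all $\xi_k\in\mathbb{R}^d$. This follows from the definition (the segment after any time being conditionally independent of the segment before it given the present) by a routine argument: the one-step equality $F(\xi_{j+1}|[x_i]_0^j)=F(\xi_{j+1}|x_j)$ yields the Markov factorization of the joint law, and conversely Markovianity gives the displayed equality for every $k>j$. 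Crucially this characterization is purely distributional and needs no nonsingularity, so it applies to the general (possibly singular) Gaussian sequence.

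The forward implication is then immediate: if $[x_k]$ is Markov, the conditional laws $F(\xi_k|[x_i]_0^j)$ and $F(\xi_k|x_j)$ coincide, hence so do their means, giving $E[x_k|[x_i]_0^j]=E[x_k|x_j]$ for all $j<k$.

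For the converse I would invoke two standard facts about jointly Gaussian vectors, both valid in the singular case (understood via a regular conditional distribution, or via the Moore--Penrose pseudoinverse in the explicit Gaussian conditioning formulas): the conditional law of $x_k$ given $[x_i]_0^j$ (and given $x_j$) is Gaussian, and its conditional covariance is deterministic. Writing $\Sigma_1=\mathrm{Cov}(x_k|[x_i]_0^j)$ and $\Sigma_2=\mathrm{Cov}(x_k|x_j)$, the law of total covariance together with the determinacy of these matrices gives $\Sigma_1=\mathrm{Cov}(x_k)-\mathrm{Cov}(E[x_k|[x_i]_0^j])$ and $\Sigma_2=\mathrm{Cov}(x_k)-\mathrm{Cov}(E[x_k|x_j])$. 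The hypothesis $E[x_k|[x_i]_0^j]=E[x_k|x_j]$ makes the two subtracted terms equal, so $\Sigma_1=\Sigma_2$. Having matched both the means (by hypothesis) and the covariances, the two Gaussian conditional laws are identical, i.e. $F(\xi_k|[x_i]_0^j)=F(\xi_k|x_j)$ for all $j<k$, which by the CDF characterization means $[x_k]$ is Markov.

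The main obstacle is not the algebra but justifying the Gaussian conditioning facts when $\mathrm{Cov}([x_i]_0^j)$ is singular: one must phrase ``the conditional law is Gaussian with deterministic covariance'' through a regular conditional distribution so that the law-of-total-covariance step is legitimate, and confirm that the conditional mean remains an affine function of the conditioning variables in that regime. Once these are secured, the remaining steps are routine, and the argument runs in parallel to that of Lemma~\ref{GaussianCMc_Definition_Expectation}.
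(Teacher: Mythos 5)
Your argument is correct, but note that the paper does not prove this lemma in the text at all: it is quoted from \cite{CM_Part_I_Conf}, so the only basis for comparison is the paper's method in the analogous proofs it does give (sufficiency in Theorem \ref{CMc_Characterization_Theorem}, and Corollary \ref{Markov_Characterization}, whose proof is declared ``parallel''). There the route for the converse is: form the residual $r(k,j)=x_k-E[x_k|x_j]$, observe from the hypothesis and the defining orthogonality property of $E[x_k|[x_i]_0^j]$ that $r(k,j)$ is uncorrelated with $[x_i]_0^j$, upgrade uncorrelatedness to independence by joint Gaussianity, and conclude $E[x_k|[x_i]_0^j]=E[x_k|x_j]$ together with the conditional-law identity via the a.s.\ uniqueness of the conditional expectation. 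You instead match the two Gaussian conditional laws by matching their first two moments, computing the (deterministic) conditional covariances through the law of total covariance. Both are valid in the singular case; the paper's residual-plus-independence argument has the advantage of never invoking a regular conditional distribution or the law of total covariance (it works entirely at the level of orthogonality and the defining property of conditional expectation, which is why the paper can run it verbatim with MP-inverses), whereas your moment-matching argument is arguably more transparent about \emph{why} equality of conditional means suffices in the Gaussian setting: the conditional covariance carries no extra information because it is nonrandom. The one point you correctly flag --- that the conditional law of a jointly Gaussian vector given a possibly degenerate Gaussian vector is Gaussian with deterministic covariance --- is standard and is exactly the fact the paper's own machinery (equations \eqref{Normal_Eq_BC1}--\eqref{Gaussian_Linear}) establishes by hand, so no genuine gap remains.
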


By definitions, every reciprocal sequence is $[0,k_2]$-$CM_L$ and $[k_1,N]$-$CM_F$, $\forall k_1,k_2 \in [0,N]$. The following theorem gives the relationship between CM and reciprocal sequences \cite{CM_Part_II_A_Conf}.

\begin{theorem}\label{CM_iff_Reciprocal}
$[x_k]$ is reciprocal iff it is (i) $CM_L$ and $[k_1,N]$-$CM_F,$ $ \forall k_1 \in [0,N]$, or equivalently (ii) $CM_F$ and $[0,k_2]$-$CM_L,$ $\forall k_2 \in [0,N]$.

\end{theorem}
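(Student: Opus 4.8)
The plan is to prove the equivalence ``reciprocal $\iff$ (i)'' in full and then obtain ``reciprocal $\iff$ (ii)'' for free by a time-reversal argument. The forward implication of the first equivalence is immediate: as noted just before the theorem, a reciprocal sequence is $[0,k_2]$-$CM_L$ and $[k_1,N]$-$CM_F$ for all $k_1,k_2$, so in particular (taking $k_2=N$) it is $CM_L$ and $[k_1,N]$-$CM_F$ for every $k_1$. Hence the real content is the converse (i) $\implies$ reciprocal, which I would establish through Lemma \ref{CDF}: fixing $j<k<l$ in $[0,N]$, I must show $F(\xi_k\,|\,[x_i]_0^j,[x_i]_l^N)=F(\xi_k\,|\,x_j,x_l)$.

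My strategy is to peel off the outside of $[j,l]$ in two stages. First I would reduce the future $[x_i]_{l+1}^N$ using the hypothesis $[j,N]$-$CM_F$ (Lemma \ref{CMc_CDF} with $k_1=c=j$), which says that, conditioned on $x_j$, the segment $[x_i]_j^N$ is Markov. The chain's Markov cut at time $l$ yields $x_k \perp [x_i]_{l+1}^N \mid x_j,x_l$ (since $j<k<l$), i.e.\ $F(\xi_k\,|\,x_j,[x_i]_l^N)=F(\xi_k\,|\,x_j,x_l)$. Second, I would reduce the past $[x_i]_0^{j-1}$ using $CM_L$ (Lemma \ref{CMc_CDF} with $k_1=0$, $c=N$), which says that, conditioned on $x_N$, the segment $[x_i]_0^{N-1}$ is Markov. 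The Markov cut at time $j$ gives $[x_i]_0^{j-1}\perp (x_k,[x_i]_l^{N-1})\mid x_j$, all conditioned on $x_N$; applying the standard conditional-independence rule $A\perp(B,C)\mid D \Rightarrow A\perp B\mid (C,D)$, and noting that $[x_i]_l^{N-1}$ together with the conditioned $x_N$ is $[x_i]_l^N$, then yields $x_k\perp [x_i]_0^{j-1}\mid x_j,[x_i]_l^N$, i.e.\ $F(\xi_k\,|\,[x_i]_0^j,[x_i]_l^N)=F(\xi_k\,|\,x_j,[x_i]_l^N)$. Chaining the two reductions produces the reciprocal identity. I would also dispatch the boundary case $l=N$ (where the first stage is vacuous and the second reduces directly to $CM_L$) and the trivial short-interval cases separately.

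For the ``equivalently (ii)'' clause I would invoke time reversal. The reciprocal property in Lemma \ref{CDF} is symmetric in past and future, so $[x_k]$ is reciprocal iff the reversed sequence $\tilde x_k \triangleq x_{N-k}$ is reciprocal; moreover reversal interchanges $CM_F$ with $CM_L$ and maps $[k_1,N]$-$CM_F$ to $[0,N-k_1]$-$CM_L$, so that (i) for one of $[x_k],[\tilde x_k]$ is equivalent to (ii) for the other. Applying the already-proved equivalence ``reciprocal $\iff$ (i)'' to $[\tilde x_k]$ and translating back through the reversal then gives ``reciprocal $\iff$ (ii)'' for $[x_k]$.

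I expect the main obstacle to be the conditional-independence bookkeeping in the converse, namely tracking precisely which variables sit in the conditioning set at each stage and invoking the right identity. The delicate point is that the two CM hypotheses condition on different anchor times ($x_j$ for the $CM_F$ step and $x_N$ for the $CM_L$ step), so the two reductions must be sequenced so that the conditioning set entering the second stage is exactly $x_j$ and $[x_i]_l^N$ for the $A\perp(B,C)\mid D \Rightarrow A\perp B\mid(C,D)$ manipulation to land on the desired identity. Verifying that each Markov cut is legitimate for the range $j<k<l$ and that the edge cases are covered is the remaining care required; this is also where the hypothesis ``$[k_1,N]$-$CM_F$ for \emph{all} $k_1$'' is genuinely used, since the first stage requires it precisely at $k_1=j$.
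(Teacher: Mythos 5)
The paper does not actually prove Theorem \ref{CM_iff_Reciprocal}; it is quoted from reference [15] (\cite{CM_Part_II_A_Conf}) with no argument given, so there is no in-paper proof to compare against. Judged on its own, your argument is correct and self-contained. The easy direction is rightly delegated to the remark preceding the theorem (every reciprocal sequence is $[0,k_2]$-$CM_L$ and $[k_1,N]$-$CM_F$ for all $k_1,k_2$). For the converse, your two-stage peeling is sound: the $[j,N]$-$CM_F$ hypothesis (used precisely at $k_1=j$, which is where the ``for all $k_1$'' quantifier earns its keep) gives $F(\xi_k|x_j,[x_i]_l^N)=F(\xi_k|x_j,x_l)$ via the cut at $l$, and $CM_L$ gives $F(\xi_k|[x_i]_0^j,[x_i]_l^N)=F(\xi_k|x_j,[x_i]_l^N)$ via the cut at $j$ followed by weak union, with $(x_N,[x_i]_l^{N-1})$ reassembling into $[x_i]_l^N$; chaining yields Lemma \ref{CDF}'s condition. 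The time-reversal derivation of (ii) is also fine, since the reciprocal condition in Lemma \ref{CDF} is reversal-invariant and reversal carries $CM_L$ to $CM_F$ and $[k_1,N]$-$CM_F$ to $[0,N-k_1]$-$CM_L$. Two points you should make explicit in a polished write-up, since the setting is non-Gaussian and possibly singular (no densities): (a) Lemma \ref{CMc_CDF} states only the one-sided (forward) Markov-type condition, so each ``cut'' invokes the standard equivalence between the one-sided Markov property and the symmetric past--future conditional independence given the present; (b) the weak-union step $A\perp(B,C)\mid D \Rightarrow A\perp B\mid(C,D)$ must be applied at the level of conditional expectations/CDFs (it holds for general $\sigma$-algebra conditional independence, so this is routine but worth stating). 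Neither is a gap, just bookkeeping to be recorded.
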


\section{Dynamic Model and Characterization of $CM_c$ Sequences}\label{Section_CM}

\subsection{Dynamic Model}

The following theorem presents a model of ZMG $CM_c$ sequences called a $CM_c$ model. A Gaussian sequence is $CM_c$ iff its zero-mean part is $CM_c$. So, based on Theorem \ref{CMc_Dynamic_Forward_Theorem}, a model of nonzero-mean Gaussian $CM_c$ sequences is obtained.

\begin{theorem}\label{CMc_Dynamic_Forward_Theorem}
A ZMG $[x_k]$ is $CM_c, c \in \lbrace 0,N \rbrace$, iff it obeys
\begin{align}
x_k=G_{k,k-1}x_{k-1}+G_{k,c}x_c+e_k, \quad k \in [1,N] \setminus \lbrace c \rbrace
\label{CMc_Dynamic_Forward}
\end{align}
where $[e_k]$ is a zero-mean white Gaussian sequence with $G_k=\text{Cov}(e_k)$, and boundary condition\footnote{Note that $\eqref{CMc_Forward_BC2}$  means that for $c=N$ we have $x_N=e_N$ and $x_0=G_{0,N}x_N+e_0$; for $c=0$ we have $x_0=e_0$. Likewise for $\eqref{CMc_Forward_BC1}$.}
\begin{align}
&x_c=e_c, \quad x_0=G_{0,c}x_c+e_0 \, \, (\text{for} \, \, c=N) \label{CMc_Forward_BC2}
\end{align}
or equivalently\footnote{$e_0$ and $e_N$ in $\eqref{CMc_Forward_BC2}$ are not necessarily the same as $e_0$ and $e_N$ in $\eqref{CMc_Forward_BC1}$. Just for simplicity we use the same notation.}
\begin{align}
&x_0=e_0, \quad x_c=G_{c,0}x_0+e_c \, \, (\text{for} \,\, c=N)\label{CMc_Forward_BC1}
\end{align}    

\end{theorem}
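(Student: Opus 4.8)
The plan is to prove both implications through the conditional-expectation characterization of $CM_c$ in Lemma \ref{GaussianCMc_Definition_Expectation}, carrying out the case $c=N$ explicitly (the case $c=0$ being entirely analogous, and in fact simpler, since it has only the single boundary equation $x_0=e_0$ and its recursion starts from $x_0$). Throughout, the feature that forces a different argument from the nonsingular treatment of \cite{CM_Part_I_Conf} is that the relevant covariance matrices need not be invertible, so the gains cannot be defined by inversion; instead I would work with orthogonal (least-squares) projections, using the fact that for jointly ZMG vectors the conditional expectation is a.s. a linear function of the conditioning variables even when their covariance is singular.

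For necessity, suppose $[x_k]$ is a ZMG $CM_c$ sequence with $c=N$. For each $k\in[1,N-1]$ I would set $e_k\triangleq x_k-E[x_k\,|\,x_{k-1},x_N]$ and, by linearity of the Gaussian conditional expectation, write $E[x_k\,|\,x_{k-1},x_N]=G_{k,k-1}x_{k-1}+G_{k,N}x_N$ for suitable (not necessarily unique) gain matrices, which yields \eqref{CMc_Dynamic_Forward}. The boundary terms are handled by taking $e_N\triangleq x_N$ and $e_0\triangleq x_0-E[x_0\,|\,x_N]=x_0-G_{0,N}x_N$, giving \eqref{CMc_Forward_BC2}. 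Each $e_k$ is a linear combination of the $x_i$, hence ZMG with $G_k=\mathrm{Cov}(e_k)$. For whiteness, the $CM_c$ property in the form of Lemma \ref{GaussianCMc_Definition_Expectation} gives $E[x_k\,|\,[x_i]_0^{k-1},x_N]=E[x_k\,|\,x_{k-1},x_N]$, so $e_k$ is the projection error of $x_k$ onto $\{[x_i]_0^{k-1},x_N\}$ and is therefore uncorrelated with every $x_i$, $i<k$, and with $x_N$. Since each $e_j$ with $j<k$ (including $e_0$ and $e_N=x_N$) is a linear function of $\{[x_i]_0^{k-1},x_N\}$, it follows that $E[e_k e_j']=0$, establishing that $[e_k]$ is white. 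The equivalent form \eqref{CMc_Forward_BC1} is obtained by the same construction with the two endpoints interchanged, i.e., projecting $x_N$ onto $x_0$; since each form independently characterizes $CM_c$ sequences, the two are equivalent.

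For sufficiency, suppose $[x_k]$ obeys \eqref{CMc_Dynamic_Forward}–\eqref{CMc_Forward_BC2} with $[e_k]$ white ZMG. Iterating the recursion from index $j$ up to $k$ expresses $x_k=\Phi_{k,j}x_j+\Psi_k x_N+\sum_{i=j+1}^{k}\Lambda_{k,i}e_i$ for matrices built from the gains, so that—after noting $x_N=e_N$ and that each of $[x_i]_0^j$ is a linear function of $e_0,\dots,e_j,e_N$—the noise block $e_{j+1},\dots,e_k$ is uncorrelated with $\{[x_i]_0^j,x_N\}$. Taking conditional expectations and using the zero mean of the noise gives $E[x_k\,|\,[x_i]_0^j,x_N]=\Phi_{k,j}x_j+\Psi_k x_N$, which depends only on $x_j$ and $x_N$; by the tower property this common value equals $E[x_k\,|\,x_j,x_N]$, so the characterization of Lemma \ref{GaussianCMc_Definition_Expectation} holds and $[x_k]$ is $CM_c$. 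That $[x_k]$ is ZMG follows because the recursion is well-posed (each $x_k$ is computed explicitly from the noises) and every $x_k$ is a linear combination of the ZMG $e_i$.

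I expect the main obstacle to be the careful handling of singularity: verifying that the gains of the necessity direction exist as linear representations of the conditional expectations when the relevant covariances are singular (so inversion is unavailable and the representation is nonunique), and ensuring that every ``orthogonality'' claim is phrased and justified purely in terms of covariances (uncorrelatedness) rather than projections in an invertible geometry. Establishing that uncorrelatedness of the jointly Gaussian error $e_k$ with the conditioning variables yields the required conditional-expectation identity, independently of which generalized inverse is used, is the technical heart of the argument and is exactly the point at which the nonsingular proofs of \cite{CM_Part_I_Conf} break down.
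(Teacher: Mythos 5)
Your proposal is correct and follows essentially the same route as the paper's proof: define $e_k$ as the residual $x_k-E[x_k\,|\,x_{k-1},x_c]$, use the linearity of the Gaussian conditional expectation (valid despite singular covariances) to extract the gains and boundary conditions, derive whiteness from Lemma \ref{GaussianCMc_Definition_Expectation}, and for sufficiency iterate the recursion and invoke the same lemma. The one step you flag as the ``technical heart'' but do not execute---justifying the linear representation $E[x_k\,|\,y_{k-1}]=C^{xy}_{k,k-1}(C^y_{k-1})^+y_{k-1}$ when $C^y_{k-1}$ is singular---is handled in the paper by solving the normal equation with the Moore--Penrose inverse and observing that $(I-C^y_{k-1}(C^y_{k-1})^+)y_{k-1}=0$ almost surely, which is exactly the argument you sketch.
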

\begin{proof}
Necessity: We first prove it for $c=N$ (i.e., $CM_L$). Let $[x_k]$ be a ZMG $CM_L$ sequence with covariance function $C_{l_1,l_2}$. It is shown that $[x_k]$ is modeled by $\eqref{CMc_Dynamic_Forward}$ along with $\eqref{CMc_Forward_BC2}$ or $\eqref{CMc_Forward_BC1}$. First, we obtain boundary condition $\eqref{CMc_Forward_BC1}$. Let $x_0=e_0$, where $e_0$, a ZMG vector with covariance $C_0$, is defined for notational unification. The conditional expectation $E[x_N|x_0]$ is the a.s. unique Borel measurable function of $x_0$ for which\footnote{Note that there is no density function for singular Gaussian sequences.}
\begin{align}
E[(x_N-E[x_N|x_0])g(x_0)]=0\label{Conditional_Expectation_BC1}
\end{align}
for every Borel measurable function $g$.

We now show the existence of $B$ for which $E[(x_N-Bx_0])g(x_0)]=0$ for every Borel measurable function $g$. Then, by the uniqueness of the conditional expectation in $\eqref{Conditional_Expectation_BC1}$, we conclude $E[x_N|x_0]=Bx_0$ \cite{Doob}, \cite{Anan} and obtain $B$. 

The following equation
\begin{align}
BC_{0}=C_{N,0}\label{Normal_Eq_BC1}
\end{align}
has a solution $B=C_{N,0}C_{0}^++S(I-C_{0}C_{0}^+)$ for any matrix $S$, where the superscript ``$+$" means the Moore Penrose inverse (MP-inverse) \cite{Li}. We have $BC_0=(C_{N,0}C_{0}^++S(I-C_{0}C_{0}^+))C_0=C_{N,0}C^+_0C_0$. So, to show that $B=C_{N,0}C_{0}^++S(I-C_{0}C_{0}^+)$ is a solution of $\eqref{Normal_Eq_BC1}$, it suffices to show that $C_{N,0}C^+_0C_0=C_{N,0}$. To show it, define $u \triangleq (I-C_0C^+_0)x_0$. Clearly, $E[u]=(I-C_0C^+_0)E[x_0]=0$ and $\text{Cov}(u)=(I-C_0C_0^+)C_0(I-C_0C_0^+)'=0$. So, $u = (I-C_0C^+_0)x_0=0$ or $x_0=C_0C^+_0x_0$ (a.s.). Then, $C_{N,0}=E[x_Nx_0']=E[x_N(C_0C^+_0x_0)']=C_{N,0}C^+_0C_0$, where $(C^+_0)'=C^+_0$ (see \cite{Matrix_CB} for properties of the MP-inverse). 

Since $[x_k]$ is zero-mean, $\eqref{Normal_Eq_BC1}$ can be rewritten as 
\begin{align}
E[(x_N-Bx_0)x_0']=0\label{Cond_Gaussian_BC1}
\end{align}
which means $x_N-Bx_0$ is uncorrelated with (and orthogonal to, because $[x_k]$ is zero-mean) $x_0$. Due to the Gaussianity of $[x_k]$, $x_N-Bx_0$ and $x_0$ are independent and we have
\begin{align}
E[(x_N-Bx_0)g(x_0)]=0\label{Cond_Gaussian}
\end{align}
for every Borel measurable function $g$. Comparing $\eqref{Conditional_Expectation_BC1}$ and $\eqref{Cond_Gaussian}$, and by the uniqueness of the conditional expectation, we have $E[x_N|x_0]=Bx_0$ for $B$ given above (i.e., solution of $\eqref{Normal_Eq_BC1}$). Also, $E[x_N|x_0]=C_{N,0}C_0^{+}x_0$ since $(I-C_{0}C_{0}^+)x_0\stackrel{a.s.}{=}0$. We define $e_N$ as $e_N =x_N-C_{N,0}C_{0}^+x_0$. By $\eqref{Cond_Gaussian_BC1}$, $e_N$ and $e_0$ are uncorrelated. Also, the covariance of $e_N$ is $C_N-C_{N,0}C_0^+C_{N,0}'$. 

We can obtain $\eqref{CMc_Forward_BC2}$ as $x_N=e_N$ and $x_0=C_{0,N}C_{N}^+x_N+ e_0$, 
where $e_N$ and $e_0$ are uncorrelated ZMG vectors with covariances $C_N$ and $C_0-C_{0,N}C_N^+C_{0,N}'$, respectively.

Following a similar argument as above, based on the definition of the conditional expectation $E[x_k|y_{k-1}]$, $y_k=[x_k',x_N']'$, we obtain $E[x_k|y_{k-1}]=A_ky_{k-1}$, where $A_k=C^{xy}_{k,k-1}(C^y_{k-1})^++S(I-C^y_{k-1}(C^y_{k-1})^+)$, $C^y_{k-1}=\text{Cov}(y_{k-1})$, and $C^{xy}_{k,k-1}=\text{Cov}(x_k,y_{k-1})$. In addition, we have $(I-C^{y}_{k-1}(C^y_{k-1})^+)y_{k-1}\stackrel{a.s.}{=}0$, because $\text{Cov}((I-C^{y}_{k-1}(C^y_{k-1})^+)y_{k-1})=0$ and $E[(I-C^{y}_{k-1}(C^y_{k-1})^+)y_{k-1}]=0$. Thus, we have a.s.
\begin{align}
E[x_k|y_{k-1}]=C^{xy}_{k,k-1}(C^y_{k-1})^+y_{k-1}\label{Gaussian_Linear}
\end{align}
We define $e_k$, $\forall k \in [1,N-1]$, as
\begin{align}
e_k=x_k-E[x_k|x_{k-1},x_N]\label{CML_Dynamic_Covariance}
\end{align}
where $[e_k]$ is a zero-mean white Gaussian sequence (with covariances $G_k=C_k - C^{xy}_{k,k-1}(C^y_{k-1})^+(C^{xy}_{k,k-1})'$, $k \in [1,N-1]$), which can be verified as follows. By the definition of the conditional expectation $E[x_k|[x_i]_0^{k-1},x_N]$, we have
\begin{align}
E[(x_k-E[x_k|[x_i]_0^{k-1},x_N])g([x_i]_0^{k-1},x_N)]=0\label{Cond_E_General}
\end{align}
for every Borel measurable function $g$. Then, by Lemma \ref{GaussianCMc_Definition_Expectation}, $\eqref{Cond_E_General}$ leads to
\begin{align}
E[(x_k-E[x_k|x_{k-1},x_N])g([x_i]_0^{k-1},x_N)]=0\label{Cond_E_CML}
\end{align}
Since $x_k-E[x_k|x_{k-1},x_N]$ is uncorrelated with $g([x_i]_0^{k-1},$ $x_N)$, it can be seen from $\eqref{CML_Dynamic_Covariance}$ that $[e_k]$ is white ($E[e_ke_{j}']=0$, $k\neq j$). Thus, given any ZMG $CM_L$ sequence, its evolution obeys $\eqref{CMc_Dynamic_Forward}$ along with $\eqref{CMc_Forward_BC2}$ or $\eqref{CMc_Forward_BC1}$. 

Proof of necessity for $c=0$ (i.e., $CM_F$) is similar. We have $x_0=e_0$, $x_1=C_{1,0}C_0^{+}x_0+e_1$, and $x_k=C^{xy}_{k,k-1}(C^y_{k-1})^{+}y_{k-1}+e_k, k \in [2,N]$, where $G_0=C_0$, $G_1=C_1-C_{1,0}C_0^{+}C_{1,0}'$, and $G_k=C_k - C^{xy}_{k,k-1}(C^y_{k-1})^{+}(C^{xy}_{k,k-1})', k \in [2,N]$.

Sufficiency: Our proof of sufficiency is similar to that of the zero-mean nonsingular Gaussian $CM_c$ model \cite{CM_Part_I_Conf}. From $\eqref{CMc_Dynamic_Forward}$, we have $x_k=G_{k,j}x_j+G_{k,c|j}x_c+e_{k|j}$, where $G_{k,j}$ and $G_{k,c|j}$ can be obtained from parameters of $\eqref{CMc_Dynamic_Forward}$, and $e_{k|j}$ is a linear combination of $[e_l]_{j+1}^k$. Since $[e_k]$ is white, $[e_l]_{j+1}^k$ (and so $e_{k|j}$) is uncorrelated with $[x_k]_0^j$ and $x_c$. So, we have $E[x_k|[x_{i}]_{0}^{j},x_c]=E[x_k|x_j,x_c]$. Then, by Lemma \ref{GaussianCMc_Definition_Expectation}, $[x_k]$ is $CM_c$. 

$\eqref{L_2}$ and $\eqref{F}$ (below) are always nonsingular. Then, by $\eqref{Mxe}$, $\eqref{CMc_Dynamic_Forward}$--$\eqref{CMc_Forward_BC2}$ (for every parameter value) admit a unique covariance function (i.e., a unique sequence). Similarly, $\eqref{CMc_Dynamic_Forward}$ and $\eqref{CMc_Forward_BC1}$ admit a unique covariance function for every parameter value.
\end{proof}

The boundary conditions $\eqref{CMc_Forward_BC2}$ and $\eqref{CMc_Forward_BC1}$ are equivalent. So, later we only consider one of them.
 
Consider $\eqref{CMc_Dynamic_Forward}$--$\eqref{CMc_Forward_BC2}$ for $c=N$. We have
\begin{align}
\mathcal{G}x&=e \label{Mxe}
\end{align}
where $e \triangleq [e_0' , e_1', \ldots  , e_N']'$, $x \triangleq [x_0' , x_1' ,  \ldots , x_N']'$, and $\mathcal{G}$ is 
\begin{align}\label{L_2}
\left[ \begin{array}{cccccc}
I & 0 & 0 &  \cdots & 0 & -G_{0,N}\\
-G_{1,0} & I & 0 &  \cdots & 0 & -G_{1,N}\\
0 & -G_{2,0} & I & 0 & \cdots & -G_{2,N}\\
\vdots & \vdots & \vdots & \vdots & \vdots & \vdots \\
0 & 0 & \cdots & -G_{N-1,N-2} & I & -G_{N-1,N}\\
0 & 0 & 0 &  \cdots & 0 & I
\end{array}\right]
\end{align}
From $\eqref{Mxe}$, the covariance matrix of $x$ (i.e., $C$) is calculated as 
\begin{align}
C=\mathcal{G}^{-1}G(\mathcal{G}')^{-1}\label{C_Inverse}
\end{align}
where $G=\text{diag}(G_0,\ldots,G_N)$. Similarly, for $c=0$, the covariance is given by $\eqref{C_Inverse}$, where $G=\text{diag}(G_0,\ldots,G_N)$ and $\mathcal{G}$ is
\begin{align}\label{F}
\left[ \begin{array}{cccccc}
I & 0 & 0 &  \cdots & 0 & 0\\
-2G_{1,0} & I & 0 &  \cdots & 0 & 0\\
-G_{2,0} & -G_{2,1} & I & 0 & \cdots & 0\\
\vdots & \vdots & \vdots & \vdots & \vdots & \vdots \\
-G_{N-1,0} & 0 & \cdots & -G_{N-1,N-2} & I & 0\\
-G_{N,0} & 0 & 0 &  \cdots & -G_{N,N-1} & I
\end{array}\right]
\end{align}

By $\eqref{C_Inverse}$, we can determine the imposed condition on the parameters of $\eqref{CMc_Dynamic_Forward}$--$\eqref{CMc_Forward_BC2}$ due to a specific singularity. An example follows.

\begin{corollary}\label{CML_Dynamic_Forward_Corollary_2nd}
A ZMG $[x_k]$ with covariance function $C_{l_1,l_2}$ is $CM_L$ with the matrices
\begin{align}
\left[\begin{array}{cc}
C_k & C_{k,N} \\
C_{N,k} & C_{N}
\end{array}\right], k \in [0,N-2] \label{TC_CML}
\end{align}
being nonsingular iff 
\begin{align}
x_k&=G_{k,k-1}x_{k-1} + G_{k,N}x_N+e_k, k \in [1,N-1]\label{CML_Dynamic_Forward_2nd}\\
x_N&=e_N, \quad x_0=G_{0,N}x_N+e_0\label{CML_Forward_BC2_2nd}
\end{align}
where $[e_k]$ is a zero-mean white Gaussian sequence with $G_k=\text{Cov}(e_k)$, and the matrices 
\begin{align}
\left[\begin{array}{cc}
P_{k} & P_{k,N} \\
P_{N,k} & P_{N}
\end{array}\right], k \in [0,N-2]\label{TP_CML1}
\end{align}
are nonsingular (positive definite\footnote{$P$ is always positive (semi)definite.}), with $P = \mathcal{G}^{-1}G(\mathcal{G}')^{-1}$, $G=\text{diag}(G_0,\ldots,G_N)$, and $\mathcal{G}$ being given by $\eqref{L_2}$.

\end{corollary}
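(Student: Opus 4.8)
The plan is to obtain this corollary as a direct specialization of Theorem~\ref{CMc_Dynamic_Forward_Theorem} (case $c=N$) combined with the covariance formula \eqref{C_Inverse}. The single observation that drives everything is that the matrix $P=\mathcal{G}^{-1}G(\mathcal{G}')^{-1}$ defined in the statement is, by \eqref{C_Inverse}, exactly the covariance matrix $C$ of any sequence generated by the model \eqref{CMc_Dynamic_Forward} with $\mathcal{G}$ as in \eqref{L_2}. Hence $P=C$ as full block matrices, so $P_k=C_k$, $P_{k,N}=C_{k,N}$, $P_{N,k}=C_{N,k}$, and $P_N=C_N$, which makes the two $2\times 2$ block matrices \eqref{TC_CML} and \eqref{TP_CML1} \emph{identical}. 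Since a principal submatrix of the positive semidefinite $P$ is again positive semidefinite, ``nonsingular'' and ``positive definite'' coincide for these blocks, which justifies the parenthetical remark in the statement.

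With this in hand, necessity proceeds as follows. Starting from a ZMG $CM_L$ sequence whose blocks \eqref{TC_CML} are nonsingular for $k\in[0,N-2]$, the necessity part of Theorem~\ref{CMc_Dynamic_Forward_Theorem} for $c=N$ already provides the model \eqref{CMc_Dynamic_Forward} with boundary condition \eqref{CMc_Forward_BC2}, which for $c=N$ is exactly \eqref{CML_Dynamic_Forward_2nd}--\eqref{CML_Forward_BC2_2nd}. I would point out that the block \eqref{TC_CML} at index $k$ is precisely $\mathrm{Cov}([x_k',x_N']')$, i.e. the matrix $C^y_k$ from that proof; thus its nonsingularity on $[0,N-2]$ means the Moore--Penrose inverses $(C^y_{k-1})^+$ appearing in the parameter formulas collapse to ordinary inverses for $k\in[1,N-1]$, so the coefficients $G_{k,k-1},G_{k,N}$ are uniquely determined. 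Because the covariance of this sequence equals $P$ by \eqref{C_Inverse} and $P=C$, the blocks \eqref{TP_CML1} coincide with the nonsingular blocks \eqref{TC_CML} and are therefore nonsingular.

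For sufficiency I would run the same chain backwards: given a sequence obeying \eqref{CML_Dynamic_Forward_2nd}--\eqref{CML_Forward_BC2_2nd} with white $[e_k]$ and with \eqref{TP_CML1} nonsingular on $[0,N-2]$, the sufficiency part of Theorem~\ref{CMc_Dynamic_Forward_Theorem} guarantees that $[x_k]$ is $CM_L$ and well posed, since $\mathcal{G}$ in \eqref{L_2} is always nonsingular. Its covariance is $C=\mathcal{G}^{-1}G(\mathcal{G}')^{-1}=P$ by \eqref{C_Inverse}, so the blocks \eqref{TC_CML} equal the assumed-nonsingular blocks \eqref{TP_CML1} and inherit their nonsingularity, completing the equivalence.

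The genuine care --- and essentially the only nontrivial point --- lies in establishing the identity $P=C$ and verifying that the block indexing is consistent, i.e. that the $(k,k)$ and $(k,N)$ block entries of $\mathcal{G}^{-1}G(\mathcal{G}')^{-1}$ are $C_k$ and $C_{k,N}$; this rests entirely on \eqref{C_Inverse} and on the invertibility of $\mathcal{G}$ stated in the proof of Theorem~\ref{CMc_Dynamic_Forward_Theorem}. Once $P=C$ is secured, the corollary reduces to transporting a single nonsingularity hypothesis across this equality, with the minor bookkeeping of the index shift $k\mapsto k-1$ relating the hypothesis range $[0,N-2]$ to the covariance blocks $C^y_{k-1}$ used in the dynamic recursion.
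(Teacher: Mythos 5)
Your proposal is correct and takes essentially the same route as the paper's own (much terser) proof: invoke Theorem~\ref{CMc_Dynamic_Forward_Theorem} for the equivalence between $CM_L$ and the model \eqref{CML_Dynamic_Forward_2nd}--\eqref{CML_Forward_BC2_2nd}, then use \eqref{C_Inverse} to identify $P$ with the covariance $C$ so that the block matrices \eqref{TC_CML} and \eqref{TP_CML1} coincide and the nonsingularity hypotheses transfer. Your added remarks (MP-inverses collapsing to ordinary inverses, nonsingular principal submatrices of a positive semidefinite matrix being positive definite) are consistent elaborations rather than a different argument.
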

\begin{proof}
A ZMG $[x_k]$ is $CM_L$ iff we have $\eqref{CML_Dynamic_Forward_2nd}$--$\eqref{CML_Forward_BC2_2nd}$. Also, $P$ is the covariance of $[x_k]$ (see $\eqref{C_Inverse}$). So, $\eqref{TC_CML}$ and $\eqref{TP_CML1}$ are equal. 
\end{proof}

By having different values of the parameters, $\eqref{CMc_Dynamic_Forward}$--$\eqref{CMc_Forward_BC2}$ can model all Gaussian $CM_c$ sequences ranging from a nonsingular $CM_c$ sequence to a singular $CM_c$ sequence a.s. zero throughout the time interval. For example, let $|G_k|=0, \forall k \in [0,N]$ ($|\cdot|$ denotes the determinant operator), and all other parameters of $\eqref{CMc_Dynamic_Forward}$--$\eqref{CMc_Forward_BC2}$ be zero. By $\eqref{C_Inverse}$, such a $CM_c$ model is for a white sequence with $|C_k|=0, \forall k \in [0,N]$ (for a scalar-valued sequence, it is actually an a.s. zero sequence). Another extreme is when all the matrices $G_k$ are nonsingular leading to a nonsingular Gaussian $CM_c$ sequence.  

Let $[x_k]$ be a ZMG $CM_L$ sequence. $x_{n}$ and $y_{n-1}=[x_{n-1}' , x_N']'$ are a.s. linearly dependent iff $e_{n}$ is a.s. zero (i.e., $\text{Cov}(e_{n})=0$). It can be verified by $\eqref{CML_Dynamic_Covariance}$. 

Let $[x_k]$ be a ZMG $CM_L$ sequence. $x_{n}$ is a.s. zero iff both $e_{n}$ and $C^{xy}_{n,n-1}(C^y_{n-1})^+y_{n-1}$ are a.s. zero. It is verified as follows. By $\eqref{CML_Dynamic_Covariance}$, $x_{n}$ is a.s. zero iff we have a.s.
\begin{align}
e_{n}+C^{xy}_{n,n-1}(C^y_{n-1})^+y_{n-1}=0\label{S1}
\end{align}
Post-multiplying both sides of $\eqref{S1}$ by $e_{n}'$ and taking expectation, it is concluded that $\text{Cov}(e_{n})=0$, where the fact that $e_{n}$ is orthogonal to $x_{n-1}$ and $x_N$, has been used (see $\eqref{Cond_E_CML}$). Then, by $\eqref{S1}$, we have a.s. $C^{xy}_{n,n-1}(C^y_{n-1})^+y_{n-1}=0$. Therefore, $x_{n}$ is a.s. zero iff both terms of $\eqref{S1}$ are a.s. zero.

\subsection{Characterization}

Two characterizations are presented for Gaussian $CM_c$ sequences with any kind of singularity. The first characterization is as follows.

\begin{theorem}\label{CMc_Characterization_Theorem}
A Gaussian $[x_k]$ with covariance function $C_{l_1,l_2}$ is $CM_c, c \in \lbrace 0,N \rbrace$, iff
\begin{align}
C_{k,i}=\left[ \begin{array}{cc}
C_{k,j} & C_{k,c}
\end{array}\right] \left[\begin{array}{cc}
C_{j} & C_{j,c}\\
C_{c,j} & C_{c}
\end{array}\right]^+ \left[\begin{array}{c}
C_{j,i}\\
C_{c,i}
\end{array}\right]\label{CMc_C}
\end{align}
$\forall i, j, k \in [0,N] \setminus \lbrace c \rbrace$, $ i < j < k$, where the superscript ``$+$" means the MP-inverse. 

\end{theorem}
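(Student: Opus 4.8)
The plan is to characterize the $CM_c$ property through conditional expectations and then translate that into a covariance identity using the Gaussian structure. By Lemma~\ref{GaussianCMc_Definition_Expectation}, a Gaussian $[x_k]$ is $[0,N]$-$CM_c$ iff $E[x_k|[x_i]_{k_1}^{j},x_c]=E[x_k|x_j,x_c]$ for all relevant indices, where here $k_1=0$ (and the conditioning index $c$ is fixed). The key observation is that this equality of conditional expectations can be tested against the earlier coordinates: it holds iff $x_k - E[x_k|x_j,x_c]$ is uncorrelated with every $x_i$ for $i<j$ (together with $x_j$ and $x_c$, which it is automatically uncorrelated with, being a conditional-expectation residual). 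So the first step is to compute $E[x_k|x_j,x_c]$ explicitly as a linear function of $(x_j,x_c)$.

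Second, I would obtain that linear form exactly as in the proof of Theorem~\ref{CMc_Dynamic_Forward_Theorem}. Setting $z=[x_j',x_c']'$, the normal-equation argument there (existence of a solution $B$ to $B\,\text{Cov}(z)=\text{Cov}(x_k,z)$ via the MP-inverse, plus the a.s. identity $(I-\text{Cov}(z)\,\text{Cov}(z)^+)z=0$ forcing $\text{Cov}(x_k,z)\,\text{Cov}(z)^+\text{Cov}(z)=\text{Cov}(x_k,z)$) gives
\begin{align}
E[x_k|x_j,x_c]=\left[\begin{array}{cc}C_{k,j}&C_{k,c}\end{array}\right]\left[\begin{array}{cc}C_j&C_{j,c}\\C_{c,j}&C_c\end{array}\right]^+\left[\begin{array}{c}x_j\\x_c\end{array}\right].\label{Plan_CE}
\end{align}
This step reuses machinery already established, so it is routine; the only care needed is that the MP-inverse, not an ordinary inverse, appears because singularity is allowed.

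Third comes the translation to covariances. The $CM_c$ property (for the Gaussian case) is equivalent to $x_k-E[x_k|x_j,x_c]$ being orthogonal to $x_i$ for every $i<j$, $i\neq c$. Taking the expectation of $(x_k-E[x_k|x_j,x_c])x_i'$ and using \eqref{Plan_CE} gives precisely
\begin{align}
C_{k,i}=\left[\begin{array}{cc}C_{k,j}&C_{k,c}\end{array}\right]\left[\begin{array}{cc}C_j&C_{j,c}\\C_{c,j}&C_c\end{array}\right]^+\left[\begin{array}{c}C_{j,i}\\C_{c,i}\end{array}\right],\label{Plan_C}
\end{align}
which is \eqref{CMc_C}. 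For necessity I would argue that $CM_c$ forces \eqref{Plan_CE} to equal $E[x_k|[x_i]_0^j,x_c]$, whose residual is orthogonal to all of $[x_i]_0^j$ (by the defining property of conditional expectation), yielding \eqref{Plan_C} for all $i<j$. For sufficiency I would run the implication backwards: \eqref{Plan_C} holding for all $i<j$ says the residual $x_k-E[x_k|x_j,x_c]$ is uncorrelated with $x_j$, $x_c$, and every earlier $x_i$, hence (by jointly Gaussianity) independent of the whole vector $([x_i]_0^j,x_c)$, so $E[x_k|[x_i]_0^j,x_c]=E[x_k|x_j,x_c]$ and Lemma~\ref{GaussianCMc_Definition_Expectation} gives $CM_c$.

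I expect the main obstacle to be handling the MP-inverse cleanly in the singular case rather than any conceptual difficulty. Specifically, one must justify that \eqref{Plan_CE} genuinely equals the conditional expectation even when $\text{Cov}(z)$ is singular (so $B$ is not unique), which rests on the a.s. range condition $(I-\text{Cov}(z)\text{Cov}(z)^+)z=0$ already exploited in Theorem~\ref{CMc_Dynamic_Forward_Theorem}; and one should verify that it suffices to test orthogonality against the individual $x_i$ (equivalently against arbitrary Borel $g$ via Gaussian independence) to recover the full conditional-expectation equality. A secondary subtlety is that \eqref{CMc_C} is stated for all triples $i<j<k$, whereas the conditional-expectation criterion of Lemma~\ref{GaussianCMc_Definition_Expectation} quantifies over $j<k$ with the implicit earlier segment; I would confirm these quantifier ranges match, noting that varying $j$ and $i$ independently over $\{0,\dots,N\}\setminus\{c\}$ with $i<j<k$ captures exactly the orthogonality of the residual against each earlier coordinate.
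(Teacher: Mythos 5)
Your proposal is correct and follows essentially the same route as the paper's proof: both express $E[x_k|x_j,x_c]$ via the MP-inverse formula (justified by the a.s.\ identity $(I-\text{Cov}(z)\text{Cov}(z)^+)z=0$), use Lemma \ref{GaussianCMc_Definition_Expectation} to reduce the $CM_c$ property to equality of conditional expectations, and pass between the covariance identity and orthogonality of the residual $x_k-E[x_k|x_j,x_c]$ to the earlier coordinates, invoking joint Gaussianity to upgrade uncorrelatedness to independence in the sufficiency direction. The only cosmetic difference is that the paper first reduces explicitly to the zero-mean case, which your argument implicitly assumes when identifying uncorrelatedness with orthogonality.
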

\begin{proof}
A Gaussian sequence is $CM_c$ iff its zero-mean part is $CM_c$. Also, a sequence and its zero-mean part have the same covariance function. So, it suffices to consider zero-mean sequences. 

Necessity: Let $[x_k]$ be a ZMG $CM_c$ sequence with covariance function $C_{l_1,l_2}$. Define
\begin{align}
r(k,j)=x_k-E[x_k|y_{j}]\label{T1}
\end{align}
$\forall j, k \in [0,N] \setminus \lbrace c \rbrace$, $ j< k$, and $y_{j} \triangleq [ x_j' , x_c' ]'$. Then, since $[x_k]$ is Gaussian, $\eqref{T1}$ leads to (see $\eqref{Gaussian_Linear}$)
\begin{align}
r(k,j)=x_k-\left[ \begin{array}{cc}
C_{k,j} & C_{k,c}
\end{array}\right]\left[ \begin{array}{cc}
C_{j} & C_{j,c}\\
C_{c,j} & C_{c}
\end{array}\right]^{+}\left[ \begin{array}{c}
x_j\\
x_c
\end{array}\right]\label{T2}
\end{align}

On the other hand, by the definition of the conditional expectation $E[x_k|[x_i]_0^{j},x_c]$, we have
\begin{align}
E[(x_k-E[x_k|[x_i]_0^{j},x_c])g([x_i]_0^{j},x_c)]=0\label{T12}
\end{align}
for every Borel measurable function $g$. Then, by Lemma \ref{GaussianCMc_Definition_Expectation}, we have
\begin{align}
E[(x_k-E[x_k|x_{j},x_c])g([x_i]_0^{j},x_c)]=0\label{T13}
\end{align}
By $\eqref{T13}$, $r(k,j)$ is uncorrelated with $[x_i]_0^j$ and $x_c$. So, post-multiplying both sides of $\eqref{T2}$ by $x_i'$, $\forall i \in [0,j-1] \setminus \lbrace c \rbrace$, and taking expectation, we obtain $\eqref{CMc_C}$, where $i,j,k \in [0,N] \setminus \lbrace c \rbrace$, $i < j < k$.

Sufficiency: Let $[x_k]$ be a ZMG sequence with covariance function $C_{l_1,l_2}$ satisfying $\eqref{CMc_C}$, $\forall i,j,k \in [0,N] \setminus \lbrace c \rbrace$, $i<j<k$. Since $[x_k]$ is Gaussian, we have
\begin{align}
E[x_k|x_j,x_c]=\left[ \begin{array}{cc}
C_{k,j} & C_{k,c}
\end{array}\right]\left[ \begin{array}{cc}
C_{j} & C_{j,c}\\
C_{c,j} & C_{c}
\end{array}\right]^{+}\left[ \begin{array}{c}
x_j\\
x_c
\end{array}\right]\label{T9}
\end{align}
 
Define 
\begin{align}
r(k,j)=x_k-\left[ \begin{array}{cc}
C_{k,j} & C_{k,c}
\end{array}\right]\left[ \begin{array}{cc}
C_{j} & C_{j,c}\\
C_{c,j} & C_{c}
\end{array}\right]^{+}\left[ \begin{array}{c}
x_j\\
x_c
\end{array}\right]\label{T4}
\end{align}
where based on $\eqref{CMc_C}$, it is concluded that $r(k,j)$ is uncorrelated with (and since $[x_k]$ is zero-mean, orthogonal to) $[x_i]_0^{j-1} \setminus \lbrace c \rbrace$ (it is seen by post-multiplying both sides of $\eqref{T4}$ by $x_i'$, $\forall i \in [0,j-1] \setminus \lbrace c \rbrace$ and taking expectation). In addition, $r(k,j)$ is orthogonal to $x_j$ and $x_c$. It can be verified based on $\eqref{T9}$ and the definition of the conditional expectation $E[x_k|x_j,x_c]$, where $E[(x_k-E[x_k|x_j,x_c])g(x_j,x_c)]=0$ for every Borel measurable function $g$. Then, due to the Gaussianity, $r(k,j)$ is independent of $[x_i]_0^{j}$ and $x_c$, and consequently $r(k,j)$ is uncorrelated with $g([x_i]_0^{j},x_c)$ for every Borel measurable function $g$. Thus, by the a.s. uniqueness of the conditional expectation in $\eqref{T12}$, 
\begin{align}
E[x_k|&[x_i]_0^j,x_c]=\left[ \begin{array}{cc}
C_{k,j} & C_{k,c}
\end{array}\right]\left[ \begin{array}{cc}
C_{j} & C_{j,c}\\
C_{c,j} & C_{c}
\end{array}\right]^{+}\left[ \begin{array}{c}
x_j\\
x_c
\end{array}\right]\label{Tc1}
\end{align}

So, by $\eqref{T9}$ and $\eqref{Tc1}$, $\forall j,k \in [0,N] \setminus \lbrace c \rbrace$, $j<k$, we have $E[x_k|[x_i]_0^j,x_c]=E[x_k|x_j,x_c]$. Then, by Lemma \ref{GaussianCMc_Definition_Expectation}, $[x_k]$ is $CM_c$.
\end{proof}

The following characterization of the Gaussian $CM_c$ sequence is based on the concept of state in system theory (i.e., Markov property).

\begin{corollary}\label{CMc_Characterization_Markov}
A Gaussian $[x_k]$ is $CM_c$ iff $[y_k] \setminus \lbrace y_c \rbrace $ \footnote{For $c=N$, $[y_k] \setminus \lbrace y_c \rbrace  \triangleq [y_k]_0^{N-1}$, and for $c=0$, $[y_k] \setminus \lbrace y_c \rbrace \triangleq [y_k]_1^N$.} is Markov, where $y_k \triangleq [x_k' , x_c']', \forall k \in [0,N] \setminus \lbrace c \rbrace$. 

\end{corollary}
\begin{proof}
It can be verified by Lemma \ref{GaussianCMc_Definition_Expectation} or Theorem \ref{CMc_Dynamic_Forward_Theorem}.
\end{proof}

\section{Characterization and Dynamic Model of Reciprocal Sequences}\label{Section_Reciprocal}

\subsection{Characterization}

In \cite{Krener} a characterization was presented for the Gaussian reciprocal process $[x_k]$ that is second-order nonsingular, that is, $[x_k]$ for which the covariance of $y=[x_m',x_n']'$ for every $n,m \in [0,N]$ is nonsingular. Inspired by this result, in Theorem \ref{Reciprocal_Characterization_Theorem} below, a characterization of the Gaussian reciprocal sequence is presented. First, we need a corollary of Theorem \ref{CMc_Characterization_Theorem}. By definition, $[x_k]$ is $[k_1,k_2]$-$CM_c$ iff $[x_k]_{k_1}^{k_2}$ is $CM_c$. So, we have the following corollary.
 
\begin{corollary}\label{k1k2CMc_Characterization_Corollary}
A Gaussian $[x_k]$ with covariance function $C_{l_1,l_2}$ is $[k_1,k_2]$-$CM_c, c \in \lbrace k_1,k_2 \rbrace$, iff 
\begin{align}
C_{k,i}=\left[ \begin{array}{cc}
C_{k,j} & C_{k,c}
\end{array}\right] \left[ \begin{array}{cc}
C_{j} & C_{j,c}\\
C_{c,j} & C_{c}
\end{array}\right]^{+} \left[ \begin{array}{c}
C_{j,i}\\
C_{c,i}
\end{array}\right]\label{k1k2_CML}
\end{align} 
$\forall i, j, k \in [k_1,k_2] \setminus \lbrace c \rbrace$, $ i < j < k$.

\end{corollary}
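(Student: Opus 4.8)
The plan is to deduce this directly from Theorem \ref{CMc_Characterization_Theorem} by restriction to the subinterval, exactly as the sentence preceding the corollary suggests. By Remark \ref{R_CMN_2} (equivalently, by the definition of the CM interval), $[x_k]$ is $[k_1,k_2]$-$CM_c$ precisely when the truncated subsequence $[x_k]_{k_1}^{k_2}$ is a $CM_c$ sequence over its own index interval, with the conditioning index $c\in\lbrace k_1,k_2\rbrace$ being the first ($c=k_1$) or last ($c=k_2$) index of that interval. So the whole task is to recognize $[x_k]_{k_1}^{k_2}$ as a stand-alone Gaussian $CM_c$ sequence and invoke the already-proved covariance characterization.

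First I would regard $[x_k]_{k_1}^{k_2}$ as a Gaussian sequence in its own right, indexed over $[k_1,k_2]$ (or, after a shift, over $[0,k_2-k_1]$ so as to match the setup of Theorem \ref{CMc_Characterization_Theorem} verbatim). Its covariance function is simply the restriction of $C_{l_1,l_2}$ to indices $l_1,l_2\in[k_1,k_2]$, and the conditioning endpoint $c\in\lbrace k_1,k_2\rbrace$ of this subsequence plays the role of the endpoint $0$ or $N$ in the theorem. Applying Theorem \ref{CMc_Characterization_Theorem} to this subsequence then states that it is $CM_c$ iff $\eqref{CMc_C}$ holds for every strictly ordered triple of indices drawn from the (sub)interval minus $\lbrace c\rbrace$. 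Written out for the subsequence, this is exactly $\eqref{k1k2_CML}$ with $i,j,k\in[k_1,k_2]\setminus\lbrace c\rbrace$, $i<j<k$. Note that every covariance block appearing in $\eqref{k1k2_CML}$ has both indices in $[k_1,k_2]$, so the characterization is self-contained within the subinterval and no covariance information outside $[k_1,k_2]$ is required.

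There is essentially no obstacle here; all the real content lives in Theorem \ref{CMc_Characterization_Theorem}, and the corollary is a relabeling of its index set. The only point requiring a little care is the bookkeeping of the re-indexing: verifying that the conditioning index $c=k_1$ or $c=k_2$ of the subsequence corresponds correctly to the first/last index in the ambient statement, and that the ordered-triple range $i<j<k$ over $[k_1,k_2]\setminus\lbrace c\rbrace$ matches the theorem's range after the shift. Once the subsequence is identified as a genuine $CM_c$ sequence, both directions (necessity and sufficiency) transfer immediately, so I would not reprove them but simply cite Theorem \ref{CMc_Characterization_Theorem}.
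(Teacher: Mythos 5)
Your proposal is correct and follows exactly the paper's route: the paper justifies this corollary solely by the remark that $[x_k]$ is $[k_1,k_2]$-$CM_c$ iff the subsequence $[x_k]_{k_1}^{k_2}$ is $CM_c$, and then applies Theorem \ref{CMc_Characterization_Theorem} to that subsequence. Your added care about re-indexing and the self-containedness of the covariance blocks within $[k_1,k_2]$ is sound but adds nothing beyond the paper's argument.
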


\begin{theorem}\label{Reciprocal_Characterization_Theorem}
A Gaussian $[x_k]$ with covariance function $C_{l_1,l_2}$ is reciprocal iff
\begin{align}
C_{k,i}=\left[ \begin{array}{cc}
C_{k,j} & C_{k,l}
\end{array}\right] \left[ \begin{array}{cc}
C_{j} & C_{j,l}\\
C_{l,j} & C_{l}
\end{array}\right]^{+} \left[ \begin{array}{c}
C_{j,i}\\
C_{l,i}
\end{array}\right]\label{Reciprocal_1}
\end{align} 
(a) $\forall i,j,k,l \in [0,N]$ with $l < i < j < k$, and (b) $\forall i,j,k \in [0,N-1]$ with $i < j < k<l=N$ (or equivalently (a) $\forall i,j,k,l \in [0,N]$ with $i < j < k < l$, and (b) $\forall i,j,k $ $ \in [1,N]$ with $0=l<i < j < k$).

\end{theorem}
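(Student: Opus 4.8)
The plan is to reduce reciprocity to a conjunction of CM conditions via Theorem \ref{CM_iff_Reciprocal}, and then translate each CM condition into a covariance identity via Corollary \ref{k1k2CMc_Characterization_Corollary}. Since both directions in Corollary \ref{k1k2CMc_Characterization_Corollary} are iff statements, the conjunction of the resulting covariance identities is equivalent to the conjunction of the CM conditions, hence to reciprocity, so both directions of the theorem come out together. Concretely, I would invoke part (i) of Theorem \ref{CM_iff_Reciprocal}: $[x_k]$ is reciprocal iff it is $CM_L$ (i.e.\ $[0,N]$-$CM_L$) and $[k_1,N]$-$CM_F$ for every $k_1 \in [0,N]$.

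First I would treat the $CM_L$ part. Applying Corollary \ref{k1k2CMc_Characterization_Corollary} with $k_1=0$, $k_2=N$, and $c=N$, the index set $[k_1,k_2]\setminus\{c\}$ becomes $[0,N-1]$, and \eqref{k1k2_CML} is exactly \eqref{Reciprocal_1} with $l=N$, holding for all $i<j<k$ in $[0,N-1]$. This is precisely condition (b).

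Next I would treat the family $[k_1,N]$-$CM_F$. Applying Corollary \ref{k1k2CMc_Characterization_Corollary} with $k_2=N$ and $c=k_1$, the set $[k_1,N]\setminus\{k_1\}$ consists of all indices strictly above $k_1$, so \eqref{k1k2_CML} becomes \eqref{Reciprocal_1} with $l=k_1$ under the constraint $k_1<i<j<k\le N$. Letting $k_1$ range over $[0,N]$ and relabeling it as $l$, the totality of these identities is exactly condition (a), namely \eqref{Reciprocal_1} for all $l<i<j<k$ in $[0,N]$. (When $N-k_1$ is too small to admit a triple $i<j<k$ above $k_1$, both the CM condition and the corresponding covariance identities are vacuous, consistent with the convention that short intervals are automatically $CM_c$.)

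Assembling the pieces gives the result. For necessity, a reciprocal $[x_k]$ is $CM_L$ and $[k_1,N]$-$CM_F$ for all $k_1$ by Theorem \ref{CM_iff_Reciprocal}, so Corollary \ref{k1k2CMc_Characterization_Corollary} yields (b) and (a). For sufficiency, (a) and (b) force $[x_k]$ to be $CM_L$ and $[k_1,N]$-$CM_F$ for all $k_1$ by the same corollary, whence Theorem \ref{CM_iff_Reciprocal} gives reciprocity. The alternative form follows identically from part (ii) of Theorem \ref{CM_iff_Reciprocal}, with $CM_F$ ($c=0$) producing the $l=0$ identity and $[0,k_2]$-$CM_L$ ($c=k_2$) producing \eqref{Reciprocal_1} for all $i<j<k<l$. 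The only genuine work is the index bookkeeping, i.e.\ checking that the ranges delivered by the corollary for each $c$ and CM-interval exactly tile the ranges (a) and (b); there is no analytic obstacle, since singularity and the MP-inverse were already absorbed into Theorem \ref{CMc_Characterization_Theorem} and its corollary.
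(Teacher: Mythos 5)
Your proposal is correct and follows exactly the paper's route: the paper's proof is the one-line remark that the theorem ``follows from Theorem \ref{CM_iff_Reciprocal} and Corollary \ref{k1k2CMc_Characterization_Corollary},'' and your argument is precisely that reduction with the index bookkeeping made explicit. Your identification of condition (b) with the $CM_L$ characterization and condition (a) with the family of $[k_1,N]$-$CM_F$ characterizations (and the analogous matching for the alternative form) is accurate.
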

\begin{proof}
It follows from Theorem \ref{CM_iff_Reciprocal} and Corollary \ref{k1k2CMc_Characterization_Corollary}.
\end{proof}

First, the characterization presented in \cite{Krener} only works for second-order nonsingular Gaussian reciprocal sequences. The characterization of Theorem \ref{Reciprocal_Characterization_Theorem} works for all Gaussian reciprocal sequences. Second, Theorem \ref{CM_iff_Reciprocal} implies the equality of two sets of sequences, i.e., $\cap _{k_1=0}^N [k_1,N]$-$CM_F \cap _{k_2=0}^N [0,k_2]$-$CM_L = \cap _{k_1=0}^N [k_1,N]$-$CM_F \cap CM_L$. Accordingly, and by Corollary \ref{k1k2CMc_Characterization_Corollary}, for a Gaussian sequence, $\eqref{Reciprocal_1}$ holds for (a) $\forall i,j,k,l \in [0,N]$ with $l < i < j < k $, and (b) $\forall i,j,k,l \in [0,N]$ with $i  < j < k <l $ iff $\eqref{Reciprocal_1}$ holds for (a) $\forall i,j,k,l \in [0,N]$ with $l < i < j < k $, and (b) $\forall i,j,k \in [0,N-1]$ with $i < j < k < l=N$. Although the two conditions are equivalent, the latter is simpler (and more revealing) than the former. It seems \cite{Krener} was not aware of the simpler condition. We obtained the simpler condition based on studying reciprocal sequences from the CM viewpoint, which is different from that of \cite{Krener}. It shows how insightful the CM viewpoint is for studying reciprocal sequences.

Another characterization of the Gaussian reciprocal sequence is based on the concept of state in system theory (i.e. Markov property).

\begin{corollary}\label{Reciprocal_Charactetization_Markov_Proposition}
i) A Gaussian $[x_k]$ is reciprocal iff $[y_k]_{k_1+1}^N$ with $y_k \triangleq [x_k',x_{k_1}']', \forall k \in [k_1+1,N]$, $\forall k_1 \in [0,N]$, and $[y_k]_{0}^{N-1}$ with $y_k \triangleq [x_k',x_N']', \forall k \in [0,N-1]$, are Markov. ii) A Gaussian $[x_k]$ is reciprocal iff $[y_k]_0^{k_2-1}$ with $y_k \triangleq [x_k',x_{k_2}']', \forall k \in [0,k_2-1]$, $\forall k_2 \in [0,N]$, and $[y_k]_{1}^N$ with $y_k \triangleq [x_k',x_0']', \forall k \in [1,N]$, are Markov.

\end{corollary}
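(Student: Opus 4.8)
The plan is to reduce the statement to two equivalences that have already been established, treating part (i) and part (ii) as dual applications of the same machinery. The two pillars are Theorem~\ref{CM_iff_Reciprocal}, which decomposes the reciprocal property into a $CM_L$ (resp. $CM_F$) condition together with a family of $[k_1,N]$-$CM_F$ (resp. $[0,k_2]$-$CM_L$) conditions, and Corollary~\ref{CMc_Characterization_Markov}, which translates each $CM_c$ condition into the Markov property of a state-augmented sequence $[y_k]\setminus\{y_c\}$ with $y_k=[x_k',x_c']'$. Since every link in this chain is an ``iff'', no separate necessity/sufficiency arguments are needed once the pieces are assembled.

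First I would prove part (i). By Theorem~\ref{CM_iff_Reciprocal}(i), $[x_k]$ is reciprocal iff it is $CM_L$ and $[k_1,N]$-$CM_F$ for every $k_1 \in [0,N]$. Applying Corollary~\ref{CMc_Characterization_Markov} with $c=N$ converts the $CM_L$ condition into the Markov property of $[y_k]_0^{N-1}$ with $y_k=[x_k',x_N']'$. For the $[k_1,N]$-$CM_F$ conditions I would use the definitional fact (cf. Remark~\ref{R_CMN_2} and the discussion preceding Corollary~\ref{k1k2CMc_Characterization_Corollary}) that $[x_k]$ is $[k_1,N]$-$CM_F$ iff the restricted sequence $[x_k]_{k_1}^N$ is $CM_F$, i.e. $CM_c$ with $c$ equal to the first index $k_1$ of that subinterval. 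Applying Corollary~\ref{CMc_Characterization_Markov} to this restricted sequence then yields the Markov property of $[y_k]_{k_1+1}^N$ with $y_k=[x_k',x_{k_1}']'$, since excising the first augmented state $y_{k_1}$ leaves exactly the indices $k_1+1,\ldots,N$. Collecting these equivalences over all $k_1$ gives part (i).

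Part (ii) would then follow by the mirror-image argument, invoking the equivalent decomposition of Theorem~\ref{CM_iff_Reciprocal}(ii): $[x_k]$ is reciprocal iff it is $CM_F$ and $[0,k_2]$-$CM_L$ for every $k_2$. Corollary~\ref{CMc_Characterization_Markov} with $c=0$ handles the $CM_F$ part, producing the Markov property of $[y_k]_1^N$ with $y_k=[x_k',x_0']'$, and applying the same corollary to the restricted sequence $[x_k]_0^{k_2}$ (which is $CM_L$, i.e. $CM_c$ with $c$ the last index $k_2$) yields the Markov property of $[y_k]_0^{k_2-1}$ with $y_k=[x_k',x_{k_2}']'$.

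The only real care needed, and the step I expect to be the main obstacle to write cleanly, is the bookkeeping when transporting Corollary~\ref{CMc_Characterization_Markov}, stated for $CM_c$ over the full interval $[0,N]$, to the subintervals $[k_1,N]$ and $[0,k_2]$. One must verify that the conditioning index $c$ correctly plays the role of the first (resp. last) index of the subinterval, and that the excised augmented state $y_c$ corresponds to the correct endpoint, so that the surviving index ranges are precisely $[k_1+1,N]$ and $[0,k_2-1]$ as claimed. Once this index alignment is confirmed, the equivalence in both directions is immediate.
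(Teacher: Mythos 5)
Your proposal is correct and follows essentially the same route as the paper, which likewise derives the result by combining Theorem~\ref{CM_iff_Reciprocal}, Corollary~\ref{CMc_Characterization_Markov}, and the fact that $[x_k]$ is $[k_1,k_2]$-$CM_c$ iff $[x_k]_{k_1}^{k_2}$ is $CM_c$. Your explicit attention to the index bookkeeping on the subintervals is a sound elaboration of what the paper leaves implicit.
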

\begin{proof}
It follows from Theorem \ref{CM_iff_Reciprocal}, Corollary \ref{CMc_Characterization_Markov}, and the fact that $[x_k]$ is $[k_1,k_2]$-$CM_c$ iff $[x_k]_{k_1}^{k_2}$ is $CM_c$. 
\end{proof}

\subsection{Dynamic Model}

The reciprocal dynamic model of \cite{Levy_Dynamic} is limited to the nonsingular Gaussian reciprocal sequence. The nonsingularity assumption is critical for that model, because its well-posedness (i.e., the uniqueness of the sequence obeying the model) is guaranteed by the nonsingularity of the whole sequence. There is not any model for the general (singular/nonsingular) Gaussian reciprocal sequence in the literature, and it is not clear how to obtain such a model from the viewpoint of the literature on reciprocal sequences. For example, it is not clear how the model of \cite{Levy_Dynamic} can be extended to the general (singular/nonsingular) case. The CM viewpoint is very fruitful for studying reciprocal sequences. From it, the following theorem presents two models for the general (singular/nonsingular) Gaussian reciprocal sequence from the CM viewpoint. They are called reciprocal $CM_c$ models.

\begin{theorem}\label{Reciprocal_Model}
A ZMG $[x_k]$ is reciprocal iff it obeys $\eqref{CMc_Dynamic_Forward}$--$\eqref{CMc_Forward_BC2}$ and
\begin{align}
P_{k,i}=\left[ \begin{array}{cc}
P_{k,j} & P_{k,l}
\end{array}\right] \left[\begin{array}{cc}
P_{j} & P_{j,l}\\
P_{l,j} & P_{l}
\end{array}\right]^{+} \left[ \begin{array}{c}
P_{j,i}\\
P_{l,i}
\end{array}\right]\label{Reciprocal_P}
\end{align}
(i) for $c=N$ and $\forall i,j,k,l \in [0,N]$, $l <i < j < k $, and $\mathcal{G}$ given by $\eqref{L_2}$, or equivalently (ii) for $c=0$ and $\forall i,j,k,l \in [0,N]$, $ i < j < k < l $, and $\mathcal{G}$ given by $\eqref{F}$, where $P=(\mathcal{G})^{-1}G(\mathcal{G}')^{-1}$ and $G=\text{diag}(G_0,\ldots,G_N)$.

\end{theorem}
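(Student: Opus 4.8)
The plan is to assemble the theorem from four ingredients already established: the equivalence reciprocal $\iff$ $CM_L$ and $[k_1,N]$-$CM_F$ $\forall k_1$ (Theorem \ref{CM_iff_Reciprocal}), the dynamic-model characterization of $CM_c$ sequences (Theorem \ref{CMc_Dynamic_Forward_Theorem}), the covariance characterization of $[k_1,k_2]$-$CM_c$ sequences (Corollary \ref{k1k2CMc_Characterization_Corollary}), and the identity \eqref{C_Inverse} that expresses the covariance of any sequence obeying \eqref{CMc_Dynamic_Forward}--\eqref{CMc_Forward_BC2} as $P=\mathcal{G}^{-1}G(\mathcal{G}')^{-1}$. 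Since \eqref{L_2} (and \eqref{F}) is always nonsingular, $P$ is well defined, and for a sequence obeying the model its covariance function $C_{l_1,l_2}$ coincides with $P_{l_1,l_2}$. Hence every covariance condition I invoke can be read interchangeably in terms of $C$ or $P$, which is what lets the reciprocal characterization be transcribed into a condition on the model parameters.

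I would treat case (i) ($c=N$) in detail; case (ii) is the mirror image. By Theorem \ref{CMc_Dynamic_Forward_Theorem}, a ZMG $[x_k]$ obeys \eqref{CMc_Dynamic_Forward}--\eqref{CMc_Forward_BC2} with $c=N$ iff it is $CM_L$, and then its covariance equals $P$ by \eqref{C_Inverse}. Next, by Corollary \ref{k1k2CMc_Characterization_Corollary} applied with $c=k_1$ on the interval $[k_1,N]$, the sequence is $[k_1,N]$-$CM_F$ iff its covariance satisfies \eqref{k1k2_CML} for $k_1<i<j<k$; setting $l=k_1$ and rewriting the covariance as $P$, this is exactly \eqref{Reciprocal_P} restricted to $l<i<j<k$. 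Therefore ``the model \eqref{CMc_Dynamic_Forward}--\eqref{CMc_Forward_BC2} holds and \eqref{Reciprocal_P} holds for all $l<i<j<k$'' is equivalent to ``$[x_k]$ is $CM_L$ and is $[k_1,N]$-$CM_F$ for every $k_1\in[0,N]$'', which by Theorem \ref{CM_iff_Reciprocal}(i) is equivalent to $[x_k]$ being reciprocal. This establishes necessity and sufficiency simultaneously. Case (ii) follows identically, using the $CM_F$ model with $\mathcal{G}$ given by \eqref{F} and the equivalence reciprocal $\iff$ $CM_F$ and $[0,k_2]$-$CM_L$ $\forall k_2$ from Theorem \ref{CM_iff_Reciprocal}(ii).

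The conceptual point, and the only place where care is needed, is understanding why \eqref{Reciprocal_P} is imposed only over the range $l<i<j<k$ rather than over the full range appearing in Theorem \ref{Reciprocal_Characterization_Theorem}. The reciprocal characterization \eqref{Reciprocal_1} splits into part (a), $l<i<j<k$, and part (b), $i<j<k<l=N$; the latter is precisely the $CM_L$ condition (Corollary \ref{k1k2CMc_Characterization_Corollary} with $c=k_2=N$), which is already guaranteed by the model \eqref{CMc_Dynamic_Forward}--\eqref{CMc_Forward_BC2}. Hence only part (a) need be added, and it is this part that becomes \eqref{Reciprocal_P}. Recognizing this redundancy, namely that the dynamic model already absorbs part (b) of the reciprocal characterization, is the main (though modest) obstacle; once it is seen, the proof is a direct chaining of the cited results together with the substitution $C=P$ supplied by \eqref{C_Inverse}.
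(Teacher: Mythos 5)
Your proposal is correct and follows essentially the same route as the paper: both arguments observe that the $CM_c$ model \eqref{CMc_Dynamic_Forward}--\eqref{CMc_Forward_BC2} already supplies part (b) of the reciprocal characterization (the $CM_L$, resp. $CM_F$, condition), identify the modeled sequence's covariance with $P=\mathcal{G}^{-1}G(\mathcal{G}')^{-1}$ via \eqref{C_Inverse}, and then impose only the remaining part (a) as \eqref{Reciprocal_P}. The only cosmetic difference is that you unpack Theorem \ref{Reciprocal_Characterization_Theorem} back into Theorem \ref{CM_iff_Reciprocal} and Corollary \ref{k1k2CMc_Characterization_Corollary}, which is exactly how that theorem was proved in the first place.
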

\begin{proof}
Every reciprocal sequence is $CM_c$. A ZMG sequence is $CM_c$ iff it obeys $\eqref{CMc_Dynamic_Forward}$--$\eqref{CMc_Forward_BC2}$. The covariance matrix of a sequence modeled by a $CM_c$ model can be calculated in terms of the parameters of the model and its boundary condition (the calculated covariance matrix is denoted by $P$ above). A Gaussian sequence is reciprocal iff its covariance function satisfies $\eqref{Reciprocal_1}$. Since model $\eqref{CMc_Dynamic_Forward}$--$\eqref{CMc_Forward_BC2}$ is for a $CM_c$ sequence, $P$ satisfies condition (b) of Theorem \ref{Reciprocal_Characterization_Theorem} (note that condition (b) of Theorem \ref{Reciprocal_Characterization_Theorem} is a $CM_c$ characterization for $c=N$ or $c=0$). So, a Gaussian sequence is reciprocal iff it obeys $\eqref{CMc_Dynamic_Forward}$--$\eqref{CMc_Forward_BC2}$ (for $c=N$ or $c=0$) and $P$ satisfies $\eqref{Reciprocal_P}$. 
\end{proof}

The results of this section support the idea of studying reciprocal sequences from the CM viewpoint.

\section{Characterizations and Dynamic Models of Other CM Sequences}\label{Section_Other_CM}

Sequences belonging to more than one class of CM sequences are useful for both application and theory. For example, an application of $CM_L \cap [0,k_2]$-$CM_L$ sequences is in trajectory modeling with a waypoint and a destination. Assume the destination density (at time $N$) as well as the density of the state at a time $k_2<N$ (i.e., waypoint information) is known. First, consider the waypoint information at $k_2$ (without destination information). In other words, we know the state density at $k_2$ but not after. With a CM evolution law between $0$ and $k_2$, such trajectories can be modeled by a $[0,k_2]$-$CM_L$ sequence. Now, consider the destination information (density) without the waypoint information. Such trajectories can be modeled by a $CM_L$ sequence \cite{DD_Conf}. Then, trajectories with a waypoint and a destination information can be modeled as a sequence being both $[0,k_2]$-$CM_L$ and $CM_L$, i.e., $CM_L \cap [0,k_2]$-$CM_L$. Sequences belonging to more than one CM class are also useful for theoretical purposes. For example, by Theorem \ref{CM_iff_Reciprocal}, a reciprocal sequence belongs to several CM classes. This is particularly useful for studying reciprocal sequences from the CM viewpoint (e.g., Theorem \ref{Reciprocal_Characterization_Theorem} and Theorem \ref{Reciprocal_Model}). Also, a dynamic model of $CM_L \cap [k_1,N]$-$CM_F$ sequences is useful for obtaining a full spectrum of models ranging from a $CM_L$ model to a reciprocal $CM_L$ model.

\begin{corollary}\label{CML_Dynamic_Forward_Proposition_I}
A Gaussian $[x_k]$ is $CM_L \cap [k_1,N]$-$CM_F$ iff it obeys $\eqref{CMc_Dynamic_Forward}$--$\eqref{CMc_Forward_BC2}$ (for $c=N$), and
\begin{align}
P_{k,i}&=\left[ \begin{array}{cc}
P_{k,j} & P_{k,k_1}
\end{array}\right] \left[\begin{array}{cc}
P_{j} & P_{j,k_1}\\
P_{k_1,j} & P_{k_1}
\end{array}\right]^{+} \left[ \begin{array}{c}
P_{j,i}\\
P_{k_1,i}
\end{array}\right]\label{k1k2_P}
\end{align}
$\forall i, j, k \in [k_1+1,N]$, $ i < j < k$, where
\begin{align}
P = \mathcal{G}^{-1}G(\mathcal{G}')^{-1}\label{P_Matrix_1}
\end{align}
$G=\text{diag}(G_0,\ldots,G_N)$, and $\mathcal{G}$ is given by $\eqref{L_2}$.

\end{corollary}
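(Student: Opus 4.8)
The plan is to prove this corollary exactly as Theorem \ref{Reciprocal_Model} was proved, namely by intersecting the $CM_L$ dynamic model of Theorem \ref{CMc_Dynamic_Forward_Theorem} with the $[k_1,N]$-$CM_F$ covariance characterization of Corollary \ref{k1k2CMc_Characterization_Corollary}. Since a Gaussian sequence is $CM_L \cap [k_1,N]$-$CM_F$ iff its zero-mean part is (membership in either CM class depends only on the zero-mean part), and a sequence shares its covariance function with its zero-mean part, it suffices to treat the ZMG case throughout.

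First I would handle the $CM_L$ membership. By Theorem \ref{CMc_Dynamic_Forward_Theorem} (for $c=N$), a ZMG $[x_k]$ is $CM_L$ iff it obeys \eqref{CMc_Dynamic_Forward}--\eqref{CMc_Forward_BC2}. Whenever it does, \eqref{C_Inverse} identifies its covariance matrix as $P=\mathcal{G}^{-1}G(\mathcal{G}')^{-1}$ with $\mathcal{G}$ given by \eqref{L_2}; as noted in the sufficiency part of Theorem \ref{CMc_Dynamic_Forward_Theorem}, $\mathcal{G}$ is always invertible, so $P$ is well defined and is precisely the covariance function $C_{l_1,l_2}$ of the modeled sequence. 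Consequently the block entries $P_{k,i}$ appearing in \eqref{k1k2_P} are literally the covariances $C_{k,i}$ of $[x_k]$, and \eqref{k1k2_P} is just \eqref{k1k2_CML} written in the notation of the model.

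Next I would add the $[k_1,N]$-$CM_F$ membership. Applying Corollary \ref{k1k2CMc_Characterization_Corollary} with $c=k_1$ and $k_2=N$, so that $[k_1,k_2]\setminus\lbrace c\rbrace=[k_1+1,N]$, a Gaussian sequence is $[k_1,N]$-$CM_F$ iff its covariance function satisfies \eqref{k1k2_CML} for all $i,j,k\in[k_1+1,N]$ with $i<j<k$. Since $P$ equals the covariance $C$ of the sequence already modeled as $CM_L$, this is exactly condition \eqref{k1k2_P}. Combining the two equivalences yields the claim: a ZMG $[x_k]$ is simultaneously $CM_L$ and $[k_1,N]$-$CM_F$ iff it obeys \eqref{CMc_Dynamic_Forward}--\eqref{CMc_Forward_BC2} (the $CM_L$ part) and its covariance $P$ satisfies \eqref{k1k2_P} (the $[k_1,N]$-$CM_F$ part).

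This is essentially an assembly of prior results rather than a new argument, so I do not expect a serious obstacle. The only point demanding care is the index bookkeeping: one must confirm that $[k_1,N]$-$CM_F$ corresponds to conditioning at the \emph{first} time, i.e. $c=k_1$ (by Remark \ref{R_CMN}), that removing the conditioning index from the CM interval shifts the range to $[k_1+1,N]$, and that $P$ is legitimately substituted for the covariance function $C$ in Corollary \ref{k1k2CMc_Characterization_Corollary}. Once these identifications are verified, the equivalence follows directly.
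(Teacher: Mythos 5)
Your proposal is correct and follows essentially the same route as the paper's own proof: decompose the class as $CM_L$ intersected with $[k_1,N]$-$CM_F$, use Theorem \ref{CMc_Dynamic_Forward_Theorem} (for $c=N$) together with the identification of the covariance as $P=\mathcal{G}^{-1}G(\mathcal{G}')^{-1}$, and then impose Corollary \ref{k1k2CMc_Characterization_Corollary} with $k_2=N$, $c=k_1$. Your added care about the index range $[k_1+1,N]$ and the substitution of $P$ for $C$ only makes explicit what the paper leaves implicit.
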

\begin{proof}
A sequence is $CM_L \cap [k_1,N]$-$CM_F$ iff it is $CM_L$ and $[k_1,N]$-$CM_F$. By Theorem \ref{CMc_Dynamic_Forward_Theorem}, a Gaussian sequence is $CM_L$ iff it obeys $\eqref{CMc_Dynamic_Forward}$--$\eqref{CMc_Forward_BC2}$ (for $c=N$). Also, the covariance matrix of a $CM_L$ sequence can be calculated as $\eqref{P_Matrix_1}$. On the other hand, by Corollary \ref{k1k2CMc_Characterization_Corollary}, a Gaussian sequence is $[k_1,N]$-$CM_F$ iff its covariance function satisfies $\eqref{k1k2_CML}$ (let $k_2=N$ and $c=k_1$ in Corollary \ref{k1k2CMc_Characterization_Corollary}). Therefore, a Gaussian sequence is $CM_L \cap [k_1,N]$-$CM_F$ iff it obeys $\eqref{CMc_Dynamic_Forward}$--$\eqref{CMc_Forward_BC2}$ and $\eqref{k1k2_P}$ holds. 
\end{proof}

Following the idea of Corollary \ref{CML_Dynamic_Forward_Proposition_I}, one can obtain models of other CM sequences belonging to more than one CM class, e.g., $CM_L \cap [k_1,N]$-$CM_F \cap [l_1,N]$-$CM_F$. As a result, by Theorem \ref{CM_iff_Reciprocal}, Corollary \ref{CML_Dynamic_Forward_Proposition_I}, and Theorem \ref{Reciprocal_Model}, one can see a full spectrum of models ranging from a $CM_L$ model (Theorem \ref{CMc_Dynamic_Forward_Theorem}) to a reciprocal $CM_L$ model (Theorem \ref{Reciprocal_Model}).

Characterizations presented in Corollary \ref{CMc_Characterization_Markov} and Corollary \ref{Reciprocal_Charactetization_Markov_Proposition} are based on the Markov property. To complete those characterizations, we need a characterization of the Gaussian Markov sequence based on the covariance function. A characterization was presented in \cite{Doob} for the scalar-valued Gaussian Markov process, but its generalization to the vector-valued case is not trivial. The following corollary presents a characterization of the vector-valued general (singular/nonsingular) Gaussian Markov sequence. To our knowledge, there is no such a characterization in the literature.

\begin{corollary}\label{Markov_Characterization}
A Gaussian $[x_k]$ with covariance function $C_{l_1,l_2}$ is Markov iff $C_{k,i}=C_{k,j}C^{+}_{j}C_{j,i}$, $\forall i, j, k \in [0,N]$, $ i < j < k$. 

\end{corollary}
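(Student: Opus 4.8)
The plan is to adapt the proof of Theorem \ref{CMc_Characterization_Theorem} to the Markov setting, replacing the conditioning pair $(x_j,x_c)$ by the single conditioning variable $x_j$ and invoking Lemma \ref{GaussianMarkov_Definition_E} in place of Lemma \ref{GaussianCMc_Definition_Expectation}. Since a Gaussian sequence is Markov iff its zero-mean part is Markov, and the two share the same covariance function, I would first reduce to the zero-mean case. The single tool I would rely on is that for a ZMG sequence the conditional expectation is linear and given by $E[x_k|x_j]=C_{k,j}C_j^+x_j$ almost surely, even when $C_j$ is singular; this is exactly the linear-least-squares / MP-inverse identity established in the proof of Theorem \ref{CMc_Dynamic_Forward_Theorem} (cf. \eqref{Gaussian_Linear}), specialized to $y_j=x_j$.

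For necessity, let $[x_k]$ be a ZMG Markov sequence and set $r(k,j)=x_k-E[x_k|x_j]=x_k-C_{k,j}C_j^+x_j$ for $j<k$. By the defining property of the conditional expectation together with Lemma \ref{GaussianMarkov_Definition_E}, I would obtain $E[(x_k-E[x_k|x_j])g([x_i]_0^j)]=0$ for every Borel measurable $g$, so $r(k,j)$ is uncorrelated with $[x_i]_0^j$. Post-multiplying the expression for $r(k,j)$ by $x_i'$ for $i<j$ and taking expectations then yields $C_{k,i}-C_{k,j}C_j^+C_{j,i}=0$, which is the claimed identity.

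For sufficiency, suppose the covariance identity holds and define $r(k,j)=x_k-C_{k,j}C_j^+x_j$. The identity, read through the same post-multiplication by $x_i'$, shows $r(k,j)$ is orthogonal to $[x_i]_0^{j-1}$; orthogonality to $x_j$ follows from the definition of $E[x_k|x_j]=C_{k,j}C_j^+x_j$. Hence $r(k,j)$ is orthogonal to all of $[x_i]_0^j$, and by Gaussianity it is independent of $[x_i]_0^j$, so uncorrelated with every $g([x_i]_0^j)$. The a.s. uniqueness of the conditional expectation then gives $E[x_k|[x_i]_0^j]=C_{k,j}C_j^+x_j=E[x_k|x_j]$, and Lemma \ref{GaussianMarkov_Definition_E} yields the Markov property. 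The only real subtlety, as throughout the paper, is the singular case: there are no densities to work with, so every step must be phrased via conditional expectations and the MP-inverse, and the linearity identity $E[x_k|x_j]=C_{k,j}C_j^+x_j$ (valid because $(I-C_jC_j^+)x_j=0$ a.s.) is what must be justified carefully, rather than any difficulty in handling the indices $i<j<k$.
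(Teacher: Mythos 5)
Your proposal is correct and follows essentially the same route as the paper, which likewise proves this corollary by mirroring the proof of Theorem \ref{CMc_Characterization_Theorem} with $r(k,j)=x_k-E[x_k|x_j]$ for necessity, $r(k,j)=x_k-C_{k,j}C_j^{+}x_j$ for sufficiency, and Lemma \ref{GaussianMarkov_Definition_E} in place of Lemma \ref{GaussianCMc_Definition_Expectation}. Your added care about the a.s. identity $E[x_k|x_j]=C_{k,j}C_j^{+}x_j$ in the singular case is exactly the justification the paper relies on via \eqref{Gaussian_Linear}.
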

\begin{proof}
Our proof is parallel to that of Theorem \ref{CMc_Characterization_Theorem}. The main differences are as follows. For the proof of necessity, instead of $r(k,j)$ in $\eqref{T1}$, we need to define $r(k,j)=x_k - E[x_k|x_j]$. Also, instead of Lemma \ref{GaussianCMc_Definition_Expectation}, we should use Lemma \ref{GaussianMarkov_Definition_E}. For the proof of sufficiency, instead of $r(k,j)$ in $\eqref{T4}$, we need to define $r(k,j)=x_k - C_{k,j}C_j^{+}x_j$. 
\end{proof}

Inspired by \cite{ABRAHAM}, a representation of the zero-mean nonsingular Gaussian (ZMNG) $CM_c$ sequence as a sum of a ZMNG Markov sequence and an uncorrelated ZMNG vector was presented in \cite{CM_Part_II_B_Conf}. We now extend it to the ZMG $CM_c$ sequence in Proposition \ref{CML_Markov_z_Proposition}, which can be proved based on Theorem \ref{CMc_Dynamic_Forward_Theorem}. We omit the proof for lack of space.
 
\begin{proposition}\label{CML_Markov_z_Proposition}
A ZMG $[x_k]$ is $CM_c, c \in \lbrace 0,N \rbrace$, iff it can be represented as $x_k=y_k+ \Gamma _k x_c, k \in [0,N] \setminus \lbrace c \rbrace$, where $[y_k] \setminus \lbrace y_c \rbrace $ is a ZMG Markov sequence, $x_c$ is a ZMG vector uncorrelated with $[y_k] \setminus \lbrace y_c \rbrace $, and $\Gamma_k$ are some matrices. 

\end{proposition}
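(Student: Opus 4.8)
The plan is to prove both directions by passing through the $CM_c$ dynamic model of Theorem \ref{CMc_Dynamic_Forward_Theorem}, treating $c=N$ explicitly (the case $c=0$ follows by the same argument with boundary condition $\eqref{CMc_Forward_BC1}$ and the lower-triangular $\mathcal{G}$ of $\eqref{F}$). Throughout, ``$[y_k]\setminus\{y_c\}$'' means $[y_k]_0^{N-1}$, matching the footnote to Corollary \ref{CMc_Characterization_Markov}.

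For necessity, I would start from a ZMG $CM_L$ sequence written in the model form $\eqref{CMc_Dynamic_Forward}$--$\eqref{CMc_Forward_BC2}$ guaranteed by Theorem \ref{CMc_Dynamic_Forward_Theorem}: $x_k = G_{k,k-1}x_{k-1} + G_{k,N}x_N + e_k$ for $k\in[1,N-1]$, with $x_0 = G_{0,N}x_N + e_0$ and $x_N = e_N$, where $[e_k]$ is white and, crucially, each $e_k$, $k\in[0,N-1]$, is orthogonal to $x_N$ (this orthogonality is exactly what the residual construction $e_k = x_k - E[x_k|x_{k-1},x_N]$ in the proof of Theorem \ref{CMc_Dynamic_Forward_Theorem} delivers). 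I would then seek matrices $\Gamma_k$ so that $y_k \triangleq x_k - \Gamma_k x_N$ obeys a recursion in which the $x_N$-coupling has been removed. Substituting $x_k = y_k + \Gamma_k x_N$ into the model and requiring the coefficient of $x_N$ to vanish yields the deterministic recursion $\Gamma_k = G_{k,k-1}\Gamma_{k-1} + G_{k,N}$ with $\Gamma_0 = G_{0,N}$, after which $y_k = G_{k,k-1}y_{k-1} + e_k$ for $k\in[1,N-1]$ and $y_0 = e_0$. Because each $y_k$ is then a linear combination of $e_0,\dots,e_k$ only, whiteness of $[e_k]$ gives $e_k \perp [y_i]_0^{k-1}$, hence $E[y_k|[y_i]_0^{k-1}] = G_{k,k-1}y_{k-1} = E[y_k|y_{k-1}]$, so $[y_k]_0^{N-1}$ is Markov by Lemma \ref{GaussianMarkov_Definition_E}; the same linear-combination structure together with $e_k\perp x_N$ gives $y_k \perp x_N$ for every $k$ by induction, which is the required uncorrelatedness.

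For sufficiency, I would run the construction in reverse. Given the representation, the Markov sequence $[y_k]_0^{N-1}$ admits its own forward white-noise model $y_k = F_{k,k-1}y_{k-1}+\epsilon_k$ (the Markov specialization of the construction in Theorem \ref{CMc_Dynamic_Forward_Theorem}); substituting $x_k = y_k + \Gamma_k x_N$ recovers $\eqref{CMc_Dynamic_Forward}$ with $G_{k,k-1}=F_{k,k-1}$, $G_{k,N}=\Gamma_k - F_{k,k-1}\Gamma_{k-1}$, $G_{0,N}=\Gamma_0$, $e_k=\epsilon_k$, and the boundary identity $x_N=e_N$. The hypothesis that $x_N$ is uncorrelated with every $y_k$ (equivalently every $\epsilon_k$) makes $[e_k]_0^N$ white, so Theorem \ref{CMc_Dynamic_Forward_Theorem} certifies that $[x_k]$ is $CM_L$.

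The thing to get right is not a single hard estimate but the bookkeeping that the one subtraction $-\Gamma_k x_N$, with $\Gamma_k$ fixed by the recursion above, simultaneously (i) exactly cancels the $x_N$-coupling in the dynamics so that $[y_k]$ decouples into a genuine Markov recursion driven only by $[e_k]$, and (ii) leaves $[y_k]_0^{N-1}$ jointly---not merely pairwise---uncorrelated with $x_N$; for jointly Gaussian variables this joint uncorrelatedness is what upgrades to the independence one needs if the conclusion is to be restated through conditional expectations. An alternative, model-free route exposes the same obstacle more transparently: take $\Gamma_k = C_{k,N}C_N^+$ so that $y_k = x_k - \Gamma_k x_N \perp x_N$ by the MP-inverse projection identities already used in Theorem \ref{CMc_Dynamic_Forward_Theorem}, observe that $\sigma([x_i]_0^j,x_N)=\sigma([y_i]_0^j,x_N)$, and transport the $CM_L$ identity of Lemma \ref{GaussianCMc_Definition_Expectation} across this equality, using joint independence of $[y_k]_0^{N-1}$ from $x_N$ to drop the conditioning on $x_N$ and land on the Markov identity of Lemma \ref{GaussianMarkov_Definition_E}.
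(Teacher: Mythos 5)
Your proof is correct and follows exactly the route the paper indicates: the paper omits its own proof but states that Proposition \ref{CML_Markov_z_Proposition} ``can be proved based on Theorem \ref{CMc_Dynamic_Forward_Theorem},'' and your decoupling recursion $\Gamma_k=G_{k,k-1}\Gamma_{k-1}+G_{k,N}$ (and its reversal for sufficiency) is the natural way to do that. The only point to tidy is that verifying the Markov property of $[y_k]_0^{N-1}$ requires $E[y_k|[y_i]_0^{j}]=E[y_k|y_j]$ for all $j<k$, not just $j=k-1$, but this follows by iterating $y_k=G_{k,k-1}y_{k-1}+e_k$ exactly as in the paper's sufficiency argument for Theorem \ref{CMc_Dynamic_Forward_Theorem}.
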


A corollary of Proposition \ref{CML_Markov_z_Proposition} is as follows.

\begin{corollary}\label{CML_Decomp}
An $(N+1)d \times (N+1)d$ matrix (with $(N+1)$ blocks in each row/column and each block with dimension $d \times d$) is the covariance matrix of a $d$-dimensional vector-valued Gaussian $CM_c$ sequence iff $C=B+\Gamma D\Gamma ' $, where $D$ is a $d\times d$ positive semi-definite matrix and (i) for $c=N$, $B=\left[\begin{array}{cc}
B_1 & 0\\
0 & 0
\end{array}\right]$, $\Gamma =\left[\begin{array}{c}
S\\
I
\end{array}\right]$,
(ii) for $c=0$, $B=\left[\begin{array}{cc}
0 & 0\\
0 & B_1
\end{array}\right]$, $\Gamma =\left[\begin{array}{c}
I\\
S
\end{array}\right]$, where $B_1$ is an $Nd \times Nd$ covariance matrix of a $d$-vector Gaussian Markov sequence, $S$ is an arbitrary $Nd \times d$ matrix, and $I$ is the $d\times d$ identity matrix.  

\end{corollary}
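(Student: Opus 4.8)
The plan is to read this covariance characterization directly off the representation in Proposition \ref{CML_Markov_z_Proposition}, by stacking the per-index representation $x_k = y_k + \Gamma_k x_c$ into a single vector equation and computing the resulting block covariance.

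First I would handle necessity for $c=N$. Starting from $x_k = y_k + \Gamma_k x_N$, $k \in [0,N-1]$, I would absorb the boundary into the same form by writing $x_N = 0 + I\,x_N$, i.e.\ formally setting $y_N = 0$ and $\Gamma_N = I$. Stacking $x = [x_0',\ldots,x_N']'$ then gives $x = \tilde y + \Gamma x_N$ with $\tilde y = [y_0',\ldots,y_{N-1}',0]'$ and $\Gamma = [\Gamma_0',\ldots,\Gamma_{N-1}',I]' = [S',I]'$, where $S = [\Gamma_0',\ldots,\Gamma_{N-1}']'$. Since $x_N$ is uncorrelated with $[y_k]_0^{N-1}$ (Proposition \ref{CML_Markov_z_Proposition}), the cross terms vanish and $C = \mathrm{Cov}(\tilde y) + \Gamma\,\mathrm{Cov}(x_N)\,\Gamma'$. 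Writing $D = \mathrm{Cov}(x_N)$, which is a $d \times d$ positive semidefinite matrix, and noting that $\mathrm{Cov}(\tilde y)$ is block-diagonal with $B_1 = \mathrm{Cov}([y_k]_0^{N-1})$ in the top-left block and zeros elsewhere, yields exactly form (i) with $B = \mathrm{Cov}(\tilde y)$ and $\Gamma = [S',I]'$. Because $[y_k]_0^{N-1}$ is a ZMG Markov sequence, $B_1$ is by definition the covariance of a $d$-vector Gaussian Markov sequence, as required.

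For sufficiency (still $c=N$) I would reverse this: given any $C = B + \Gamma D\Gamma'$ of the stated form, take a ZMG Markov sequence $[y_k]_0^{N-1}$ with covariance $B_1$, an independent ZMG vector $x_N$ with covariance $D$, and define $x_k = y_k + \Gamma_k x_N$ with $\Gamma_k$ the $k$-th block of $S$. By Proposition \ref{CML_Markov_z_Proposition} the sequence $[x_k]$ is Gaussian $CM_N$, and repeating the stacking computation shows its covariance is precisely $C$. The case $c=0$ is identical up to relabeling: the conditioning block $x_0$ now sits in the first block row and column, so the identity block of $\Gamma$ and the zero block of $B$ move to the top, giving form (ii).

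The computation is routine once Proposition \ref{CML_Markov_z_Proposition} is available; the only point requiring care is the bookkeeping of where the conditioning index $c$ sits in the stacked vector, which is what distinguishes (i) from (ii). The one substantive thing to confirm is that the degrees of freedom match on both sides of the equivalence: that $S$ is genuinely arbitrary (it merely collects the free matrices $\Gamma_k$ of Proposition \ref{CML_Markov_z_Proposition}), that $D$ ranges over all $d \times d$ positive semidefinite matrices, and that $B_1$ ranges over exactly the covariances of $d$-vector Gaussian Markov sequences, so that the correspondence $(B_1,S,D) \mapsto C$ is onto the set of $CM_c$ covariances and introduces nothing extraneous. I expect this surjectivity/exactness check, rather than any single algebraic identity, to be the main thing to pin down.
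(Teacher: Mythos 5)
Your proof is correct and follows exactly the route the paper intends: the paper states this result as an immediate corollary of Proposition \ref{CML_Markov_z_Proposition} (omitting the details), and your stacking of $x_k=y_k+\Gamma_k x_c$ into $x=\tilde y+\Gamma x_c$ with the cross terms vanishing by uncorrelatedness is precisely that derivation. The final "surjectivity" worry is already handled by the two directions of the iff in Proposition \ref{CML_Markov_z_Proposition}, so no separate check is needed.
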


\section{Summary and Conclusions}\label{Section_Summary}

In this paper, (singular/nonsingular) Gaussian CM, including reciprocal and Markov, sequences have been studied, modeled, characterized, and an application has been discussed. The obtained models (characterizations) for different classes of Gaussian CM sequences are complete (i.e., necessary and sufficient) descriptions of the classes. 

The nonsingularity assumption for the reciprocal model presented in \cite{Levy_Dynamic} is critical for the well-posedness of that model, and it has not been clear how that model can be extended to the singular case even after so many years. From the CM viewpoint, however, we have obtained two models and a characterization for the (singular/nonsingular) Gaussian reciprocal sequence. The well-posedness of our models is always guaranteed without any assumption. This demonstrates the significance of studying reciprocal sequences from the CM viewpoint. A full spectrum of models (characterizations) ranging from a $CM_L$ model (characterization) to a reciprocal $CM_L$ model (characterization) has been also presented. 

We have unified singular and nonsingular Gaussian CM (including reciprocal) sequences and provided tools for their application.

Reciprocal sequences were studied from the CM viewpoint in \cite{CM_Part_II_A_Conf}--\cite{CM_Part_II_B_Conf}, \cite{CM_Part_II_B_Conference}--\cite{Thesis_Reza}. Equivalent CM dynamic models were studied in \cite{CM_Conf_Explicitly}--\cite{CM_Journal_Algebraically}.


\begin{thebibliography}{99}
 
\bibitem{CM_Part_I_Conf} R. Rezaie and X. R. Li. Nonsingular Gaussian Conditionally Markov Sequences. \textit{IEEE Western New York Image and Signal Processing Workshop}, Rochester, NY, USA, Oct. 2018, pp. 1-5.
\bibitem{Fanas1} M. Fanaswala and V. Krishnamurthy. Spatio-Temporal Trajectory Models For Meta-Level Target Tracking. \textit{IEEE AES Magazine}, vol. 30, no. 1, pp. 16-31, 2015.
\bibitem{White_Waypoint} L. B. White and F. Carravetta. Normalized Optimal Smoothers for a Class of Hidden Generalized Reciprocal Processes. \textit{IEEE T-AC}, vol. 62, no. 12, pp. 6489-6496, 2017.
\bibitem{White_TBD} L. B. White and Han Vu. Track-before-detect Using Maximum Likelihood Sequence Estimation for Hidden Reciprocal Processes. \textit{Proceedings of Defense Applications of Signal Processing}, 2011.
\bibitem{White_Tracking1} G. Stamatescu, L. B. White, and R. B. Doust. Track Extraction with Hidden Reciprocal Chains. \textit{IEEE T-AC}, vol. 63, no. 4, pp. 1097-1104, 2018.
\bibitem{White_Gaussian} L. B. White and F. Carravetta. Stochastic Realization and Optimal Smoothing for Gaussian Generalized Reciprocal Processes. \textit{IEEE CDC}, Melbourne, Australia, Dec. 2017, pp. 369-374.
\bibitem{Simon_1} B. I. Ahmad, J. K. Murphy, S. J. Godsill, P. M. Langdon, and R. Hardy. Intelligent Interactive Displays in Vehicles with Intent Prediction: A Bayesian Framework. \textit{IEEE SP Magazine}, vol. 34, no. 2, pp. 82-94, 2017.
\bibitem{Simon2} B. I. Ahmad, J. K. Murphy, P. M. Langdon, and S. J. Godsill. Bayesian Intent Prediction in Object Tracking Using Bridging Distributions. \textit{IEEE T-Cybernetics}, vol. 48, no. 1, pp. 215-227, 2018.
\bibitem{Picci} A. Chiuso, A. Ferrante, G. Picci. Reciprocal Realization and Modeling of Textured Images. \textit{IEEE CDC}, Seville, Dec. 2005, pp. 6059–6064. 
\bibitem{Levy_2} B. Levy and A. J. Krener. Stochastic Mechanics of Reciprocal Diffusions. \textit{J. Math. Physics}, vol. 37, no. 2, pp. 769-802, 1996.
\bibitem{DD_Conf} R. Rezaie and X. R. Li. Destination-Directed Trajectory Modeling and Prediction Using Conditionally Markov Sequences. \textit{IEEE West. New York Image and Signal Processing Workshop}, Rochester, NY, USA, Oct. 2018, pp. 1-5.
\bibitem{DW_Conf} R. Rezaie and X. R. Li. Trajectory Modeling and Prediction with Waypoint Information Using a Conditionally Markov Sequence. \textit{56th Allerton Conference on Communication, Control, and Computing}, Monticello, IL, USA, Oct. 2018, pp. 486-493.
\bibitem{Mehr} C. B. Mehr and J. A. McFadden. Certain Properties of Gaussian Processes and Their First-Passage Times. \textit{J. Royal Statistical Society}, vol. 27, pp. 505-522, 1965.
\bibitem{ABRAHAM} J. Abraham and J. Thomas. Some Comments on Conditionally Markov and Reciprocal Gaussian Processes. \textit{IEEE T-IT}, vol. 27, no. 4, pp. 523-525, 1981. 
\bibitem{CM_Part_II_A_Conf} R. Rezaie and X. R. Li. Gaussian Reciprocal Sequences from the Viewpoint of Conditionally Markov Sequences. \textit{Inter. Conf. Vision, Image, and Signal Processing}, Las Vegas, NV, USA, Aug. 2018, pp. 33:1-33:6.
\bibitem{CM_Part_II_B_Conf} R. Rezaie and X. R. Li. Gaussian Conditionally Markov Sequences: Dynamic Models and Representations of Reciprocal and Other Classes. \textit{IEEE T-SP}, May 2019, DOI: 10.1109/TSP.2019.2919410.
\bibitem{Bernstein} S. Bernstein. Sur Les Liaisons Entre Les Grandeurs Aleatoires. \textit{Verhand. Internat. Math. Kongr.}, Zurich, (Band I), 1932.
\bibitem{Jamison_Reciprocal} B. Jamison. Reciprocal Processes. \textit{Z. Wahrscheinlichkeitstheorie verw. Gebiete}, vol. 30, pp. 65-86, 1974. 
\bibitem{Reciprocal_Measure} C. Leonard, S. Rœlly, and J. Zambrini. Reciprocal Processes. A Measure-theoretical Point of View. \textit{Prob. Survey}, vol. 11, pp. 237–269, 2014.
\bibitem{Conforti} G. Conforti, P. D. Pra, and S. Roelly. Reciprocal Class of Jump Processes. \textit{J. of Theoretical Prob.}, vol. 30, no. 2, pp. 551-580, 2017.
\bibitem{Levy_Dynamic} B. C. Levy, R. Frezza, and A. J. Krener. Modeling and Estimation of Discrete-Time Gaussian Reciprocal Processes. \textit{IEEE T-AC}, vol. 35, no. 9, pp. 1013-1023, 1990.
\bibitem{Bacca1} E. Baccarelli and R. Cusani. Recursive Filtering and Smoothing for Gaussian Reciprocal Processes with Dirichlet Boundary Conditions. \textit{IEEE T-SP}, vol. 46, no. 3, 790-795, 1998.
\bibitem{Moura1} D. Vats and J. M. F. Moura, Recursive Filtering and Smoothing for Discrete Index Gaussian Reciprocal Processes. \textit{Annual Conf. Information Sciences and Systems}, Baltimore, Mar. 2009, pp. 377-382.
\bibitem{Moura2} D. Vats and J. M. F. Moura. Telescoping Recursive Representations and Estimation of Gauss–Markov Random Fields. \textit{IEEE T-IT}, vol. 57, no. 3, pp. 1645-1663, 2011.
\bibitem{Ackner} R. Ackner and T. Kailath. Discrete-Time Complementary Models and Smoothing. \textit{Inter. J. Control}, vol. 49, no. 5, pp. 1665-1682, 1989.
\bibitem{White} F. Carravetta and L. B. White. Modelling and Estimation for Finite State Reciprocal Processes. \textit{IEEE T-AC}, vol. 57, no. 9, pp. 2190-2202, 2012.
\bibitem {White_3} L B. White and H. X. Vu. Maximum Likelihood Sequence Estimation for Hidden Reciprocal Processes. \textit{IEEE T-AC}, vol. 58, no. 10, pp. 2670-2674, 2013. 
\bibitem{Doob} J. L. Doob. \textit{Stochastic Processes}. Wiley, 1958.
\bibitem{Krener} A. J. Krener. Realizations of Reciprocal Processes. In: Byrnes C. I., Kurzhanski A. B. (eds.) \textit{Modeling and Adaptive Control}, vol. 105, Springer, 1988.
\bibitem{Chen} J. Chen and H. L. Weinert. A New Characterization of Multivariate Gaussian Reciprocal Processes. \textit{IEEE T-AC}, vol. 38, no. 10, pp. 1601-1602, 1993. 
\bibitem{Carm} J-P Carmichael, J-C Masse, and R. Theodorescu. Representations for Multivariate Reciprocal Gaussian Processes. \textit{IEEE T-IT}, vol. 34, no. 1, pp. 155-157, 1988.
\bibitem{G_1} D. Koller and N. Friedman. \textit{Probabilistic Graphical Models}. MIT Press, 2009.
\bibitem{G_3} K. Murphy. \textit{Machine Learning}. MIT Press, 2012.
\bibitem{Anan} V. Anantharam. \textit{Stochastic Systems: Estimation and Control}. Lecture Notes, UC Berkeley, 2007.
\bibitem{Li} X. R. Li. \textit{Random Variables and Stochastic Processes}. Lecture Notes, Univ. New Orleans, 2015.
\bibitem{Matrix_CB} D. S. Bernstein. \textit{Matrix Mathematics}. Princeton University Press, 2009.


\bibitem{CM_Part_II_B_Conference} R. Rezaie and X. R. Li. Models and Representations of Gaussian Reciprocal and Conditionally Markov Sequences. \textit{Inter. Conference on Vision, Image and Signal Processing}, Las Vegas, NV, USA, Aug. 2018, pp. 65:1-65:6.
\bibitem{Thesis_Reza} R. Rezaie. \textit{Gaussian Conditionally Markov Sequences: Theory with Application}. Ph.D. Dissertation, Dept of Electrical Engineering, University of New Orleans, July 2019.


\bibitem{CM_Conf_Explicitly} R. Rezaie and X. R. Li. Explicitly Sample-Equivalent Dynamic Models for Gaussian Conditionally Markov, Reciprocal, and Markov Sequences. \textit{Inter. Conf. on Control, Automation, Robotics, and Vision Engineering}, New Orleans, LA, USA, Nov. 2018, pp. 1-6.
\bibitem{CM_Journal_Algebraically} R. Rezaie and X. R. Li. Gaussian Conditionally Markov Sequences: Algebraically Equivalent Dynamic Models. \textit{IEEE T-AES}, 2019, DOI: 10.1109/TAES.2019.2951188.




\end{thebibliography}
\end{document}